\renewenvironment{proof}[1][\proofname] {\par\pushQED{\qed}\normalfont\topsep6\p@\@plus6\p@\relax\trivlist\item[\hskip\labelsep\bfseries#1\@addpunct{.}]\ignorespaces}{\popQED\endtrivlist\@endpefalse}
\theoremstyle{plain}
\newtheorem{theorem}{Theorem}[subsection]
\newtheorem{proposition}[theorem]{Proposition}
\newtheorem{lemma}[theorem]{Lemma}
\newtheorem{cor}[theorem]{Corollary}
\theoremstyle{definition}
\newtheorem{definition}[theorem]{Definition}
\newtheorem{ex}[theorem]{Example}
\newtheorem{remark}[theorem]{Remark}
\newcommand{\nc}{\newcommand}
\newcommand{\Z}{\mathbb{Z}}
\newcommand{\C}{\mathbb{C}}
\newcommand{\ba}{\overline}
\nc{\on}{\operatorname}
\nc{\Hom}{\on{Hom}}
\nc{\Ind}{\on{Ind}}
\nc{\Irr}{\on{Irr}}
\nc{\Res}{\on{Res}}
\title{Characters of Renner Monoids and their Hecke Algebras}
\author{Andrew Hardt \and Jared Marx-Kuo \and Vaughan McDonald \and John M. O'Brien \and Alexander Vetter}
\newcommand{\Addresses}{{
  \bigskip
  
  \noindent Andrew Hardt, \textsc{Department of Mathematics, University of Minnesota, Minneapolis, MN 55455}\par\nopagebreak
  \textit{E-mail address}: \texttt{hardt040@umn.edu}

  \medskip

  \noindent Jared Marx-Kuo, \textsc{Department of Mathematics, Stanford University, Stanford, CA 94305}\par\nopagebreak
  \textit{E-mail address}: \texttt{jmarxkuo@stanford.edu}

  \medskip

  \noindent Vaughan McDonald, \textsc{Department of Mathematics, Harvard University, Cambridge, MA 02138}\par\nopagebreak
  \textit{E-mail address}: \texttt{vmcdonald@college.harvard.edu}

  \medskip

  \noindent John M. O'Brien, \textsc{Department of Mathematics, University of Minnesota, Minneapolis, MN 55455}\par\nopagebreak
  \textit{E-mail address}: \texttt{obri0741@umn.edu}
  
  \medskip
  
  \noindent Alexander Vetter, \textsc{Department of Mathematics, University of Pennsylvania, Philadelphia, PA 19104}\par\nopagebreak
  \textit{E-mail address}: \texttt{avett@sas.upenn.edu}
  
}}
\date{\today}
\begin{document}

\maketitle

\begin{abstract}
This paper gives a general algorithm for computing the character table of any Renner monoid Hecke algebra, by adapting and generalizing techniques of Solomon used to study the rook monoid. The character table of the Hecke algebra of the rook monoid (i.e., the Cartan type $A$ Renner monoid) was computed earlier by Dieng, Halverson, and Poladian using different methods. Our approach uses analogues of so-called A- and B-matrices of Solomon. In addition to the algorithm, we give explicit combinatorial formulas for the A- and B-matrices in Cartan type $C$ and use them to obtain an explicit description of the character table for the type $C$ Renner monoid Hecke algebra.
\end{abstract}

\medskip

\textit{Keywords:} Renner monoid; Hecke algebra; character table; representation; Weyl group; symplectic rook monoid.

\medskip

Mathematics Subject Classification 2010: 20M30, 20M18, 20C08, 20C15, 20G05

\section{Introduction}

\subsection{Background}
This paper concerns the character tables of Renner monoids and Hecke algebras associated to finite reductive algebraic monoids. Reductive algebraic groups have long been a centerpiece of mathematics, and the related study of reductive algebraic monoids has become a fully-fledged subject of its own. The subject was created and developed in a series of papers by Renner, Putcha, and their collaborators (see \cite{renner} and its references), and has since been studied by Solomon \cite{solomonheckealgebra, solomon1995introduction}, Halverson \cite{halverson2004representations}, Okni{\'n}ski \cite{okninski}, and others. The Braverman-Kazhdan program \cite{bravermankazhdan, ngo} has sparked renewed interest in reductive monoids: they conjecturally admit a theory of harmonic analysis suitable for integral representations of Langlands L-functions.

An algebraic monoid over an algebraically closed field $K$ is a Zariski closed submonoid of Mat$_n(K)$, the monoid of $n\times n$ matrices with entries in $K$. A reductive monoid is an algebraic monoid whose group of units is an algebraic group; equivalently, it is an algebraic monoid that is regular as a semigroup. The structure of reductive monoids mirrors that of reductive groups in many ways, and a good portion of the research in the former involves connections and analogues to the latter.

In particular, the Renner monoid of a reductive monoid plays the same role as the Weyl group of a reductive group, and it contains the Weyl group as its group of units. One can similarly define the Hecke algebra of a Renner monoid, and by the Putcha-Tits' Deformation Theorem \cite[Theorem~4.1]{putchaheckealgebra}, this is abstractly isomorphic to the monoid algebra of the Renner monoid itself.

A reductive monoid over a finite field is defined to be the set of Frobenius fixed points of a reductive monoid over the algebraic closure. Like reductive groups, much of the reductive monoid structure still holds in the finite case: the Renner monoid, Weyl group, Borel subgroup, cross-sectional lattice, and Bruhat-Renner decomposition all remain intact after passing to the Frobenius fixed points. The finite Hecke algebra is again a (trivial) deformation of the group algebra of the Renner monoid, and in a sense the Hecke algebra is a ``link'' between the reductive group and its Renner monoid. Representations of the Renner monoid correspond to those of its Hecke algebra, and a monoid version of the Borel-Matsumoto theorem \cite{monoidborelmatsumoto} tells us that these in turn correspond to certain representations of the associated reductive monoid.

In this paper, we study the character theory of finite Renner monoids and their Hecke algebras. Much work has been done in Cartan type $A$. Here, the Renner monoid is the rook monoid $R_n$, the set of all partial permutation matrices ($\{0,1\}$-matrices with at most one 1 in each row and column) of size $n$ \cite{solomon1995introduction}. Munn \cite{munn1957characters} computed the character table of the rook monoid. Solomon \cite{solomon2002representations} then reformulated this in terms of A- and B-matrices, which encode the Renner monoid character table in terms of character tables of certain subsemigroups. Steinberg \cite{steinberg} and Li, Li, and Cao \cite{liRennerMonoid} computed the character table for any Renner monoid. Dieng, Halverson, and Poladian \cite{Dieng2003} computed the character table of the rook monoid Hecke algebra, computing Frobenius and Murnaghan-Nakayama rules. Outside of Cartan type $A$, few results on the theory of Renner monoid Hecke algebras are known. In this paper, we prove a general algorithm for computing the character table of any generic Renner monoid Hecke algebra, originally defined by Godelle \cite{godelle}. In the case of the type $C_n$ Renner monoid, the symplectic Renner monoid $RSp_{2n}$, we turn this into specific formulas. We describe $RSp_{2n}$ further in Section \ref{rspdefinitionsection}. To explain our results more precisely, we recall Solomon's approach in type $A$.

\subsection{Solomon's A- and B-Matrices}

Let $R$ be a Renner monoid, $W$ be the Weyl group that is its group of units, and $\Lambda$ its cross-sectional lattice of idempotents. Then $R = \langle W,\Lambda\rangle$, and the representation theory of $R$ is governed by the representation theory of certain subsemigroups $W_e$ that are groups with identity $e\in\Lambda$. In particular, the irreducible representations $\rho$ of $R$ correspond bijectively to (and restrict to) the representations $\ba{\rho}$ of $W_e$, $e\in\Lambda$.

Let $M$ be the character table of $R$, and let $Y$ be the block diagonal matrix \[Y := \begin{bmatrix} H_n & & & \dots & 0  \\ & H_{n-1}&& & \dots \\ && \ldots \\ \dots &&& H_1 \\  0&\dots &&& H_0\end{bmatrix},\] where the $H_i$ are the character tables of the $W_e, e\in\Lambda$. $M$ and $Y$ are both invertible, so we can write $M = AY = YB$ for invertible matrices $A$ and $B$. Solomon first introduced these matrices in \cite{solomon2002representations} in type $A$ and showed that they satisfy natural combinatorial formulas \cite[Theorems~3.5,~3.11]{solomon2002representations}. Solomon's use of these matrices as a bridge between a Renner monoid and certain important subsemigroups is instrumental to our results.

\subsection{Main Results of the Paper}

Our results are divided into three parts. First, we prove a simple algorithm for computing the character table of a Renner monoid Hecke algebra, using the $q$-analogue of the Solomon decomposition: $M_q = A_qY_q = Y_qB_q$. Similarly to the group case, we let \[Y_q := \begin{bmatrix} H_n^* & & & \dots & 0  \\ & H_{n-1}^*&& & \dots \\ && \ldots \\ \dots &&& H_1^* \\  0&\dots &&& H_0^*\end{bmatrix},\] where the $H_i^*$ are the character tables of the group Hecke algebras $\mathcal{H}(W_e), e\in\Lambda$. The B-matrix of a Renner monoid encodes certain restriction multiplicities, and as a result we show that $B$ and $B_q$ are the same (Proposition \ref{heckealgebramultiplicities}). Thus, the character table $M_q$ can be computed from the knowledge of $M$ and $Y_q$, both of which are known for all types.

Second, we echo Solomon and give combinatorial descriptions of the A- and B-matrices for the Renner monoid $RSp_{2n}$. Both matrices are block upper triangular, with identity blocks on the diagonal, and can be described using partition combinatorics. The A-matrix entries are products of binomial coefficients, while the B-matrix entries are given as a sum of Littlewood-Richardson coefficients arising from a Pieri rule.

Finally, we use Solomon's method to give combinatorial formulas for the Hecke algebra character table of $RSp_{2n}$. Starkey \cite[Theorem~9.2.11]{geck2000characters} gave a combinatorial rule for the type $A$ group Hecke algebra character table, which was generalized by Shoji \cite{shoji} to Ariki-Koike algebras. We show that these formulas result in a similar formula for the Hecke algebra character table of $RSp_{2n}$.

\subsection{Outline}

In Section \ref{rennermonoidsection}, we give preliminaries about Renner monoids and their characters. We define the rook monoid and symplectic Renner monoid, give Solomon's results on the A- and B-matrices of type $A$, and show that the B-matrix encodes multiplicities of certain restrictions.

In Section \ref{heckecharacterssection}, we develop these results for the Hecke algebra. In Section \ref{heckebmatrix}, we define the Hecke algebra character table for a general Renner monoid by defining ``standard elements''. We then prove that the B-matrix for the Hecke algebra is the same as for the Renner monoid, which gives an algorithm to compute the Hecke algebra character table (Theorem \ref{heckealgebracharactertable}). In Section \ref{nonstandardelements}, we then show that the Hecke algebra character table can be used to compute character values at nonstandard elements.

In Section \ref{amatrix}, we shift focus over to the symplectic Renner monoid $RSp_{2n}$, and compute the A-matrix (Theorem \ref{amatrixcomputation}). The rows and columns of this matrix are indexed by either partitions or pairs of partitions; our formula involves binomial coefficients made from partition data. This echoes Solomon's formula for the rook monoid A-matrix, which is also in terms of similar binomial coefficients.

In Section \ref{restrictionsection}, we give a Pieri rule for type $C$, which allows us to compute the B-matrix for $RSp_{2n}$ (Corollary \ref{bmatrixcomputation}). Our formula involves Littlewood-Richardson coefficients. Note that for $R_n$, the A-matrix is a $\{0,1\}$-matrix, while for $RSp_{2n}$ in general, it isn't. We close that section with a formula of group characters analogous to Solomon's type-A formula in \cite[Corollary~3.14]{solomon2002representations}. The important feature of the formula is that it is entirely a statement involving group characters, but a proof without monoids is not obvious.

Finally, in Section \ref{typecheckechartable}, we use our preceding work to compute character tables for the Hecke algebras of $R_n$ and $RSp_{2n}$. In Section \ref{charactertablerformulassection}, we extend character table formulas by Starkey and Shoji to Renner monoid Hecke algebras (see Theorem \ref{starkeyshojimonoid}). Then in the next subsection, we perform some example computations, including the Hecke algebra character table of $RSp_4$.

In Section \ref{furtherquestionssection}, we discuss some open questions suggested by this research. Then in the appendix, we compute the A- and B-matrices, and thus the character table, for $RSp_6$.

\textbf{Acknowledgements:} This research was primarily conducted at the 2018 University of Minnesota-Twin Cities REU in combinatorics and algebra. Our research was supported by NSF RTG grant DMS-1745638. We would like to thank Benjamin Brubaker for his help, support, and mentorship throughout the project. We would also like to thank the anonymous referee for a careful reading of the paper and for suggestions that lead to multiple improvements in the exposition.

\section{Preliminaries on Renner monoids} \label{rennermonoidsection}

In this section, we set up notation and cite results from other papers about the structure of Renner monoids and their characters. A particular focus for us is the symplectic Renner monoid $RSp_{2n}$, which we define in Section \ref{rspdefinitionsection}. Our main tool in this paper is the analogue of Solomon's A- and B-matrices, which are matrices which encode a Renner monoid character table in terms of the (group) character tables of certain subgroups $W_e$; we define these matrices in Section \ref{decomposingchartable}.

\subsection{Renner Monoids}

We lay out some important definitions relating to the Renner monoid. See \cite{liRennerMonoid} for more details.

Let $\mathcal{M}$ be a finite reductive monoid, let $G$ be its group of units, and let $R$ be its Renner monoid. Let $W$ be the Weyl group of $G$, with simple reflections $S$, and let $\Lambda$ be the cross-section lattice of idempotents of $R$. Every idempotent of $R$ is conjugate to a unique element of $\Lambda$. For $e\in\Lambda$, define \[W(e) = \{w\in W \;|\; we = ew\}, \hspace{30pt} W_*(e) = \{w\in W \;|\; we = ew = e\},\] \[W^*(e) = W(e)/W_*(e).\] In other words, $W^*(e)$ is the parabolic subgroup of $W$ generated by the set of simple reflections $s\in S$ where $es = se \ne e$. We have $W(e) = W^*(e)\times W_*(e)$.

We can view $W^*(e)$ as sitting in $R$: $eW^*(e)e$ is a subsemigroup of $R$ that is a group with identity $e$, and thus is isomorphic as a group to $W^*(e)$. We set $W_e:=eW^*(e)e$ for convenience. Note that \[W_e = eW^*(e)e = eW^*(e) = W^*(e)e = eW(e)e = eW(e) = W(e)e,\] since $eW_*(e) = W_*(e)e = e$.

For $e,f\in\Lambda$, and for any $w\in W$, the cosets $W(e)w$, $wW(e), W_*(e)w, wW_*(e)$, and $W(e)wW(f)$ have unique elements of minimal length. Let the sets of these minimal length elements be denoted Red$(e,\cdot)$, Red$(\cdot,e)$, Red$_*(e,\cdot)$, Red$_*(\cdot,e)$, Red$(e,f)$, respectively. So for example, if $w\in \text{Red}(e,\cdot)$, then if $ew = w_1ew_2$, we must have $l(w_2)\ge l(w)$, and if $w\in \text{Red}_*(e,\cdot)$, then if $ew = w_1ew_2$, we must have $l(w_1) + l(w_2)\ge l(w)$.

\begin{proposition} \label{uniqueidempotentproposition} \cite[Proposition~1.11]{godelle}
Let $r\in R$. Then,
\begin{itemize}
\item There exists a unique triple $(w_1,e,w_2)$ such that $r = w_1ew_2$ and $e\in\Lambda$, $w_1\in \text{Red}_*(\cdot,e), w_2\in \text{Red}(e,\cdot)$.
\item There exists a unique triple $(w_1',e,w_2')$ (with the same $e$) such that $r = w_1'ew_2'$ and $w_1\in \text{Red}(\cdot,e), w_2\in \text{Red}_*(e,\cdot)$.
\end{itemize}
\end{proposition}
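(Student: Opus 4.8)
The plan is to first reduce to a fixed idempotent, then reduce the factorization problem to understanding the two-sided stabilizer of $e$ in $W$, and finally use minimal coset representatives in the Coxeter group $W$ to single out unique reduced factors. I will establish the first bullet in detail; the second follows by a left-right symmetric argument.

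\textit{The idempotent is forced.} Recall that $R$ decomposes as the disjoint union $R = \bigsqcup_{e\in\Lambda} WeW$, and that each such double coset meets $\Lambda$ in exactly the single element $e$ (each $\mathcal{J}$-class of $R$ contains a unique member of the cross-section lattice). Hence for $r\in R$ there is a unique $e\in\Lambda$ with $r\in WeW$, and in particular $r = w_1 e w_2$ for some $w_1,w_2\in W$. This settles existence of $e$ and shows that any two admissible triples share the same middle term.

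\textit{The stabilizer of $e$.} The crux is the identification of the fibers of the surjection $W\times W \to WeW$, $(w_1,w_2)\mapsto w_1 e w_2$. Writing $w_1 e w_2 = w_1' e w_2'$ as $aeb = e$ with $a = (w_1')^{-1}w_1$ and $b = w_2(w_2')^{-1}$, the whole problem reduces to describing $\{(a,b)\in W\times W : aeb = e\}$. I claim this set equals $\{(a,b) : a,b\in W(e),\ ab\in W_*(e)\}$. The inclusion $\supseteq$ is a direct computation: using $W(e) = W^*(e)\times W_*(e)$ and the facts that elements of $W(e)$ commute with $e$ while elements of $W_*(e)$ absorb into $e$, one checks that $aeb = e(ab) = e$ exactly when the $W^*(e)$-components of $a$ and $b$ are mutually inverse, i.e.\ $ab\in W_*(e)$. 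The reverse inclusion, that $aeb = e$ forces $a,b\in W(e)$, is the genuinely monoid-theoretic input and is where I expect the main difficulty: it relies on the structure of the $\mathcal{J}$-class of $e$ (equivalently, that the left and right stabilizers of $e$ in $W$ are parabolic subgroups controlled by $W(e)$ and $W_*(e)$), and this is the step I would either cite from Putcha-Renner theory or establish via a length and Green's-relations argument.

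\textit{Choosing reduced representatives.} I would parametrize a general solution by $a = (a^*,a_*)$ and $b = ((a^*)^{-1},b_*)$ with $a^*\in W^*(e)$ and $a_*,b_*\in W_*(e)$, so that $w_1' = w_1((a^*)^{-1},a_*^{-1})$ and $w_2' = (a^*,b_*^{-1})w_2$. Two observations drive the argument: the right coset $W(e)w_2'$ is independent of the parameters and always equals $W(e)w_2$, while the left coset $w_1' W_*(e)$ depends only on $a^*$. I fix $w_2'$ first: imposing $w_2'\in\text{Red}(e,\cdot)$ selects the unique minimal-length element of the fixed coset $W(e)w_2$, and since left multiplication by $W(e)$ is a bijection onto this coset, this determines $(a^*,b_*^{-1})\in W(e)$, hence $a^*$ and $b_*$, uniquely. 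With $a^*$ now fixed, imposing $w_1'\in\text{Red}_*(\cdot,e)$ selects the unique minimal-length element of the now-fixed coset $w_1' W_*(e)$, determining $a_*$ uniquely. Existence and uniqueness of minimal coset representatives for parabolic subgroups is the standard Coxeter-group fact underlying both reductions. Thus exactly one admissible triple $(w_1,e,w_2)$ exists, proving the first bullet; interchanging the roles of the left and right factors, and using $\text{Red}(\cdot,e)$ and $\text{Red}_*(e,\cdot)$ in place of $\text{Red}_*(\cdot,e)$ and $\text{Red}(e,\cdot)$, yields the second with the same $e$.
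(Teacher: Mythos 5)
The paper itself does not prove this statement: it is quoted from Godelle \cite[Proposition~1.11]{godelle} without proof, so there is no internal argument to compare against, and I assess your proposal on its own terms.

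Your skeleton is correct and every step checks out except the one you flagged. The uniqueness of $e$ via $R=\bigsqcup_{e\in\Lambda}WeW$ is the defining property of the cross-section lattice; the parametrization of the fiber of $(w_1,w_2)\mapsto w_1ew_2$ is right; and your two-stage selection works exactly in the order you set it up: $W(e)w_2'=W(e)w_2$ is parameter-free, so imposing $w_2'\in\text{Red}(e,\cdot)$ first pins down $b$, hence $a^*$ and $b_*$ by directness of $W(e)=W^*(e)\times W_*(e)$, after which the coset $w_1'W_*(e)$ is fixed and $w_1'\in\text{Red}_*(\cdot,e)$ pins down $a_*$. Uniqueness of minimal-length coset representatives is legitimate here because $W(e)$ and $W_*(e)$ are standard parabolic subgroups, which the paper's setup already grants. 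Note also that your argument gives existence, not just uniqueness, since the parametrization covers all factorizations and the minimal representatives can always be chosen.

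The step you left open --- that $aeb=e$ with $a,b\in W$ forces $a,b\in W(e)$ --- is true, and needs only that $R$ is an inverse monoid, so that its idempotents commute. Set $f=aea^{-1}$, an idempotent. From $aeb=e$ one gets $f(ab)=aeb=e$, whence $fe=f\bigl(f(ab)\bigr)=f(ab)=e$; also $f=e(ab)^{-1}$, whence $ef=e\cdot e(ab)^{-1}=f$. Since idempotents commute, $e=fe=ef=f$, so $ae=ea$, i.e.\ $a\in W(e)$. Then $e(ab)=aeb=e$, and applying the inversion anti-automorphism gives $(ab)^{-1}e=e$, hence $(ab)e=e$ after multiplying by the unit $ab$; so $ab\in W_*(e)$, and $b=a^{-1}(ab)\in W(e)$. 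With this three-line lemma inserted, your proof is complete; your instinct that this is the only genuinely monoid-theoretic input (everything else being Coxeter combinatorics) is accurate, and citing it from Putcha--Renner theory instead would also have been legitimate.
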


In particular, to every $r\in R$ we can associate a unique idempotent $e\in\Lambda$. We also obtain a length function: $l(r) = l(w_1) + l(w_2) = l(w_1') + l(w_2')$.

\subsection{Character Theory of Renner Monoids}

In this section, we summarize some facts from Li, Li, and Cao's work on representations of Renner monoids \cite{liRennerMonoid} that are salient to us.

Let $\chi$ be an irreducible character of $R$. Then $\chi$ is constant on the elements of the quotient $R/[R,R]$, where $[R,R]$ is the submonoid of commutators of $R$. We will call elements of $R/[R,R]$ the \textit{Munn classes} of $R$, so we can say that $\chi$ is determined by its values on representatives of the Munn classes. The character table of $R$ is square, with both rows and columns indexed by the set \[\mathcal{Q}(R) = \bigsqcup_{e\in\Lambda} \mathcal{P}(W_e), \hspace{20pt} \text{where } \mathcal{P}(W_e) \text{ indexes the representations of } W_e.\]

Given $\lambda\in \mathcal{P}(W_e)\subset\mathcal{Q}(R)$, the Munn class corresponding to $\lambda$ can be represented by $r_\lambda = ew_\lambda e$, where $w_\lambda$ is a representative for the conjugacy class of $W_e$ corresponding to $\lambda$ (which we also denote $W_\lambda$). The character $\chi_\lambda$ can be written as a sum of character values of $\ba{\chi}_\lambda$, where $\ba{\chi}_\lambda$ is the character of $W_e$ corresponding to $\lambda$ \cite[Theorem~4.1]{liRennerMonoid}. In particular, if $\lambda\in\mathcal{P}(W)$, then $\chi(w) = \ba{\chi}(w)$ for all $w\in W$.

In fact, also by \cite[Theorem~4.1]{liRennerMonoid}, we have:

\begin{proposition} \label{charactertableuppertriangular}
Let $M:=M(R)$ be the character table of $R$, let $\lambda\in\mathcal{P}(W_e), \mu\in\mathcal{P}(W_f)$. Then if $e>f$, $\chi_\lambda(r_\mu) = 0$, and if $e=f$, $\chi_\lambda(r_\mu) = \ba{\chi}_\lambda(w_\mu)$.
\end{proposition}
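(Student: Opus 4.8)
The plan is to deduce the statement from the explicit character formula of \cite[Theorem~4.1]{liRennerMonoid} recalled above, which writes $\chi_\lambda(r)$ as a sum of values of the group character $\ba{\chi}_\lambda$ of $W_e$, together with the general support principle for irreducible characters of the semisimple algebra $KR$. The organizing idea is that the irreducible $\chi_\lambda$ with $\lambda\in\mathcal{P}(W_e)$ has \emph{apex} $e$: the $KR$-module affording it is built from the Schützenberger representation at the $\mathcal{J}$-class of $e$, so it is annihilated by the elements lying strictly below $e$, and on elements at the level of $e$ its trace localizes to the maximal subgroup $W_e$. The first step is an order-theoretic translation: by Proposition \ref{uniqueidempotentproposition} every $r\in R$ has a well-defined associated idempotent in $\Lambda$, and this idempotent is a $\mathcal{J}$-class invariant, so the $\mathcal{J}$-order on $R$ matches the order on $\Lambda$. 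In particular $r_\mu = f w_\mu f$ lies in the $\mathcal{J}$-class of $f$.

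For the vanishing case $e>f$, I would let $I_{<e}$ be the two-sided ideal of $KR$ spanned by those elements whose associated idempotent is strictly below $e$; the module affording $\chi_\lambda$ factors through $KR/I_{<e}$, so $I_{<e}$ acts as $0$. Since the associated idempotent of $r_\mu$ is $f<e$, we have $r_\mu\in I_{<e}$, its matrix is zero, and hence $\chi_\lambda(r_\mu)=0$. In terms of the Li--Li--Cao summation this says that the index set defining $\chi_\lambda(r_\mu)$ records Weyl-group data witnessing that a conjugate of $r_\mu$ meets $eW_ee$; because conjugation by $W$ preserves the associated idempotent and $f<e$, this index set is empty and the sum is $0$.

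For the diagonal case $e=f$, the element $r_\mu = e w_\mu e$ lies exactly in the $\mathcal{J}$-class of $e$, with $w_\mu\in W^*(e)$ representing the conjugacy class $W_\mu$ of $W_e$ under the identification $W_e = eW^*(e)e$. Passing again to $KR/I_{<e}$, the principal factor at the $\mathcal{J}$-class of $e$ is a Rees matrix structure whose contracted algebra is a matrix algebra over $KW_e$, and in this quotient $r_\mu$ becomes the image of the diagonal group element $w_\mu\in W_e$ at the apex position. The trace of this image in the irreducible indexed by $\lambda$ is exactly $\ba{\chi}_\lambda(w_\mu)$; equivalently, a single term survives in the summation formula, namely $\ba{\chi}_\lambda(w_\mu)$.

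The main obstacle is the bookkeeping in the diagonal case: one must verify rigorously that the blocks coming from idempotents strictly above $e$ contribute nothing to the diagonal and that the surviving apex block is precisely the group representation $\ba{\rho}_\lambda$ evaluated at $w_\mu$ with coefficient $1$. This rests on the Clifford--Munn--Ponizovskii description of the irreducibles of $KR$ and on the compatibility between the Bruhat--Renner decomposition (the minimal-length representatives defining the $\mathrm{Red}$ sets) and the associated-idempotent map of Proposition \ref{uniqueidempotentproposition}; making this compatibility explicit is what isolates the unique surviving term and pins its coefficient to $1$.
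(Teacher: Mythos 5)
Your proposal is correct, but it takes a genuinely different route from the paper, which offers no internal argument at all for this proposition: the paper deduces it directly from the explicit character summation formula of \cite[Theorem~4.1]{liRennerMonoid} (the formula that, in type $C$, specializes to Proposition \ref{symplecticcharactertable}), where the claim is a one-line check --- the index set of the sum is empty when the associated idempotent of $r_\mu$ lies strictly below $e$, and in the diagonal case it collapses to the single term $\ba{\chi}_\lambda(w_\mu)$ (e.g.\ for the rook monoid, $C(t,r_\mu)=\emptyset$ when $\on{rank} r_\mu<t$ and $C(t,r_\mu)=\{\mathbf{t}\}$ when $r_\mu=e_tw_\mu e_t$). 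You instead reprove the triangularity from first principles via the Clifford--Munn--Ponizovskii/apex picture, which is essentially how Li, Li, and Cao and Steinberg \cite{steinberg} establish such formulas to begin with; what your route buys is type-independence --- it proves block upper triangularity with group character tables on the diagonal for any finite inverse monoid in characteristic zero, where semisimplicity even spares you the ``top of the Schützenberger module'' step, since the induced module $KL_e\otimes_{KW_e}V_\lambda$ is already irreducible. Two points in your sketch should be tightened, though neither breaks the argument. First, the annihilator of the apex-$e$ irreducible consists of the elements whose $\mathcal{J}$-class is \emph{not above} $J_e$; your ideal $I_{<e}$ is contained in this annihilator, and since $r_\mu$ has associated idempotent $f$ comparable to and strictly below $e$, the vanishing case goes through verbatim. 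Second, in the diagonal case there are no ``blocks coming from idempotents strictly above $e$'' to worry about: the induced module is graded by the idempotents $f_j$ in the $\mathcal{D}$-class of $e$, and the bookkeeping you flag amounts to checking that for $g=ew_\mu e\in W_e$ and a basis vector at position $f_j\ne e$ one has $gx_j=g(ef_j)x_j$, where the meet $ef_j$ has strictly smaller rank unless $e\le f_j$, which by stability of finite semigroups (comparable $\mathcal{J}$-equivalent idempotents coincide) forces $f_j=e$; hence only the apex position contributes to the diagonal, with trace exactly $\ba{\chi}_\lambda(w_\mu)$ and coefficient $1$, as you asserted.
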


We pick an order of $\mathcal{Q}$ once and for all such that if we index the rows and columns of $M$ with this order, then if $\lambda\in\mathcal{P}(W_e), \mu\in\mathcal{P}(W_f)$ with $e<f$ then the row corresponding to $\lambda$ is higher than the row corresponding to $\mu$.

\subsection{Injective Partial Transformations and the Rook Monoid} \label{rookmonoiddefinitionsection}

The \textit{rook monoid} $R_n$ is the monoid of injective partial transformations of $\mathbf{n} = \{1,2,\ldots,n\}$. It is the Renner monoid of type $A_{n-1}$. We follow notation in \cite{li2008representations,liRennerMonoid}.

An injective partial transformation of $\mathbf{n}$ is a bijective map $\sigma:I(\sigma)\to J(\sigma)$ where $I(\sigma),J(\sigma)\subseteq \mathbf{n}$. When $I(\sigma)=\mathbf{n}$, $\sigma$ is a permutation of $\mathbf{n}$.

We compose injective partial transformations (from the left) by viewing them as maps $\mathbf{n}\cup \{0\}\to\mathbf{n}\cup \{0\}$ where every element of $\mathbf{n}\cup \{0\}$ not in $I(\sigma)$ maps to 0. We can also view an injective partial transformations as a partial permutation matrix i.e. an $n\times n$ matrix with at most one 1 per row and column, with all other entries 0. With this identification, composition of functions corresponds to matrix multiplication.

Set $I^\circ(\sigma) = \{i\in\mathbf{n} \; | \; \sigma^k(i) \ne 0 \text{ for all } k\ge 0\}$, let $\sigma^\circ$ be the partial injective transformation \[\sigma^\circ(i) := \begin{cases} \sigma(i), & \text{if } i\in I^\circ(\sigma) \\ 0,& \text{else},\end{cases}\] and let $\sigma_\circ = \sigma - \sigma^\circ$, where the subtraction is subtraction of matrices. Then, $\sigma^\circ$ is a permutation on the set $I^\circ(\sigma)$, and $\sigma_\circ$ is nilpotent.

Given $K\subseteq\mathbf{n}$ with $|K|=t$, define the idempotent $e_K$ to be the identity transformation on $K$. Let $\mu_K$ (resp. $\mu^-_K$) be a bijective, order-preserving map from $\mathbf{t}$ to $K$ (resp. from $K$ to $\mathbf{t}$). These elements can be seen as rank $|K|$ ``inverses'': $\mu_K\mu^-_K = e_K$ and $\mu^-_K\mu_K = e_{\mathbf{t}}$. Given $\sigma\in W$, define $\sigma_K:= \mu^-_K\sigma\mu_K$; $\sigma_K$ permutes the entries of $\mathbf{t}$ and is 0 elsewhere, so we can view it as a permutation in $S_t$. (Note that our permutations act on the left, while in \cite{li2008representations}, they act on the right, so their definition of $\sigma_K$ is the reverse of ours).

The group of units of $R_n$ is the Weyl group $W = S_n$. The cross-sectional lattice $\Lambda$ is $\{e_t, 0\le t\le n\}$. The groups $W_{e_t}$ are $S_t$, where $S_0$ is taken to be the trivial group. The set $\mathcal{P}_t:=\mathcal{P}(S_t)$ is the set of partitions of $t$, and therefore, $\mathcal{Q}(R_n) = \{\lambda \;|\; \lambda\vdash t, 0\le t\le n\}$.

\subsection{The Symplectic Renner Monoid} \label{rspdefinitionsection}

The Renner monoid of type $C_n$ is called either the \textit{symplectic Renner monoid} or the \textit{symplectic rook monoid}. It is the set of symplectic injective partial transformations, and is denoted $RSp_{2n}$. We follow \cite{li2008representations} for its definition.

Let $m=2n$, and define the bar involution on $\mathbf{m}$ to be given by $\ba{i} = m+1-i$. We say that a subset $P$ of $\mathbf{m}$ is admissible if $P=\mathbf{m}$ or if for all $i\in P, \ba{i}\notin P$. We say that a bijective transformation on $\mathbf{m}$ is symplectic if it sends admissible sets to admissible sets, and we say that an injective partial transformation on $\mathbf{m}$ is symplectic if it is bijective and symplectic or if both its domain and range are proper admissible sets. In particular, a cycle is symplectic if and only if the set of its entries is admissible. We define: \[WSp_{2n} = \{\text{symplectic bijective transformations on } \mathbf{m}\},\] \[RSp_{2n} = \{\text{symplectic injective partial transformations on } \mathbf{m}\}.\]

If $\sigma\in RSp_{2n}$ and $K$ is an admissible set, the elements $\mu_K,\mu^-_K$, and $\sigma_K$ are all symplectic injective partial transformations, so are all contained in $RSp_{2n}$. For any $0\le t\le 2n$, we define the set \[C(t,\sigma) := \{K\subseteq I^\circ(\sigma) \; | \; K \text{ admissible},\; |K|=t,\; \sigma(K)=K\}.\] Then $C(t,\sigma)$ represents all of the admissible subsets of size $t$ which consist of all the entries some cycles of the permutation $\sigma^\circ$. $\sigma$ acts on elements of $C(t,\sigma)$ as permutations in $S_t$, so we also let \[S_{\mu,\sigma} = \{K\in C(t,\sigma) \;|\; |\mu|=t \text{ and } \sigma_K \text{ has cycle type } \mu\}.\] If $\sigma\in R_n$, we view $\mathbf{n}$ as a subset of $\mathbf{m}$ by inclusion, and since every subset of $\mathbf{n}$ is admissible, we get \[C(t,\sigma) = \{K\subseteq I^\circ(\sigma) \; | \; |K|=t, \; \sigma(K)=K\}.\] These definitions will be important in Section \ref{amatrix}.

The group of units of $RSp_{2n}$ is the Weyl group $W=WSp_{2n}$. We give a matrix description of this monoid in Section \ref{SymplecMatrix}. The cross-sectional lattice $\Lambda$ is the set $\{e_t,\; 0\le t\le n\}\cup\{1\}$. The groups $W_e,\; e\in \Lambda,\; e\ne 1$ are the groups $S_t,\; 0\le t\le n$. Therefore, \[\mathcal{Q}(RSp_{2n}) = \mathcal{SP}_n \sqcup \bigcup_{t=0}^n \mathcal{P}_t,\] where $\mathcal{SP}_n:=\mathcal{P}(WSp_{2n})$ is the set of double partitions of total size $n$ \cite[Theorem~5.5.6]{geck2000characters}.

Our main tool when working with $RSp_{2n}$ will be the character formula of Li, Li, and Cao:

\begin{proposition} \label{symplecticcharactertable} \cite[Theorem~4.2]{li2008representations}
For all $\sigma\in RSp_{2n}$, if $\chi$ is a character of $RSp_{2n}$ such that $\ba{\chi}$ is a character of $S_t$, then \[\chi(\sigma) = \sum_{K\in C(t,\sigma)} \ba{\chi}(\sigma_K).\]
\end{proposition}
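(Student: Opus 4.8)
The plan is to realize the irreducible $RSp_{2n}$-representation indexed by $\lambda\in\mathcal P_t$ on an explicit module and then read off $\chi$ as a trace. Since $\ba\chi$ is a character of $S_t=W_{e_t}$, the relevant idempotent is $e_t$, the identity transformation on the admissible set $\{1,\dots,t\}$; let $V^\lambda$ be the irreducible $S_t$-module with character $\ba\chi$, of dimension $f^\lambda$. I would take
\[
M_\lambda=\bigoplus_{\substack{K\subseteq\mathbf m\text{ admissible}\\ |K|=t}} V^\lambda_K,\qquad V^\lambda_K\cong V^\lambda ,
\]
one copy of $V^\lambda$ for each admissible $t$-subset $K$, and define the action of $\sigma\in RSp_{2n}$ block by block. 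If $K\subseteq I(\sigma)$, then $\sigma$ does not drop rank on $K$ and $\sigma(K)$ is again an admissible $t$-set (because $\sigma$ is symplectic), so I send $V^\lambda_K$ isomorphically onto $V^\lambda_{\sigma(K)}$ by letting the permutation $\mu^-_{\sigma(K)}\sigma\mu_K\in S_t$ act on $V^\lambda$; if $K\not\subseteq I(\sigma)$, then $\sigma$ annihilates $V^\lambda_K$.

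First I would check that this is a genuine representation. The identity of $RSp_{2n}$ clearly acts as the identity, and on rank-preserving blocks associativity reduces to the cocycle identity $\mu^-_{\sigma\tau(K)}(\sigma\tau)\mu_K=(\mu^-_{\sigma\tau(K)}\sigma\mu_{\tau(K)})(\mu^-_{\tau(K)}\tau\mu_K)$, which follows from $\mu_{\tau(K)}\mu^-_{\tau(K)}=e_{\tau(K)}$ acting as the identity on $\tau(K)$; the rank-dropping cases send the block to zero on both sides. The heart of the argument is then the trace computation. In the basis $\{(K,v)\}$ of $M_\lambda$, the block $V^\lambda_K$ contributes to $\operatorname{tr}_{M_\lambda}(\sigma)$ only when $\sigma$ fixes it, i.e.\ when $K\subseteq I(\sigma)$ and $\sigma(K)=K$; all other blocks are off-diagonal or zero. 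The condition $\sigma(K)=K$ with $K$ admissible forces $\sigma$ to permute $K$, whence $K\subseteq I^\circ(\sigma)$, so the surviving blocks are exactly the $K\in C(t,\sigma)$, and on each such block $\sigma$ acts through $\mu^-_K\sigma\mu_K=\sigma_K\in S_t$. Summing traces blockwise yields
\[
\operatorname{tr}_{M_\lambda}(\sigma)=\sum_{K\in C(t,\sigma)}\operatorname{tr}_{V^\lambda}(\sigma_K)=\sum_{K\in C(t,\sigma)}\ba\chi(\sigma_K),
\]
which is precisely the asserted formula.

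The step I expect to be the main obstacle is the representation-theoretic identification: verifying that $M_\lambda$ is irreducible and carries the label $\lambda\in\mathcal P_t\subseteq\mathcal Q(RSp_{2n})$, so that its character really is the $\chi$ in the statement. I would deduce this from the Clifford-Munn-Ponizovskii classification of irreducibles of a finite monoid by pairs (a regular $\mathcal J$-class, an irreducible character of its maximal subgroup): the $\mathcal J$-classes of $RSp_{2n}$ are indexed by rank, the maximal subgroup at rank $t$ is $W_{e_t}=S_t$, and $M_\lambda$ is exactly the Sch\"utzenberger-type module attached to this $\mathcal J$-class and to $V^\lambda$ (the vanishing on lower-rank elements being built into the rank-dropping rule). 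Alternatively, one can pin $M_\lambda$ down directly: restricting to the group of units $WSp_{2n}$ and invoking the upper-triangularity of Proposition \ref{charactertableuppertriangular}, together with the dimension count $\dim M_\lambda=\binom{n}{t}2^{t}f^\lambda$ (the number of admissible $t$-subsets times $f^\lambda$) checked against the known irreducible dimensions of $RSp_{2n}$, forces $M_\lambda\cong\rho_\lambda$. Once the module is correctly identified, the trace computation above finishes the proof; the genuinely delicate points throughout are the symplectic bookkeeping (admissibility of $\sigma(K)$, the order-preserving identifications $\mu_K$) rather than the trace itself.
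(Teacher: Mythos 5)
The paper gives no proof of this proposition at all: it is quoted from Li, Li, and Cao \cite[Theorem~4.2]{li2008representations}, so there is no internal argument to compare yours against. Your proof is correct, and it essentially reconstructs in explicit module-theoretic terms the route through Munn's representation theory of finite inverse semigroups that underlies the cited result. The load-bearing steps all check out: $\sigma(K)$ is admissible whenever $K\subseteq I(\sigma)$ is admissible of size $t\le n$ (either $\sigma$ is a symplectic bijection, or $\sigma(K)$ sits inside the proper admissible range $J(\sigma)$); your cocycle identity does make the block action multiplicative; and the diagonal blocks of the action of $\sigma$ are indexed exactly by $C(t,\sigma)$, since $K\subseteq I(\sigma)$ and $\sigma(K)=K$ force $K\subseteq I^\circ(\sigma)$. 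Two refinements to the identification step you flag as the main obstacle. First, you do not need Clifford--Munn--Ponizovskii machinery, and quoting it loosely is slightly risky: for a general finite monoid that theory produces irreducibles only as quotients of such induced modules, and it is special to inverse semigroups that $M_\lambda$ is already irreducible. But irreducibility here is a three-line direct argument: if $N\subseteq M_\lambda$ is a nonzero submodule with $v\in N$, $v_K\ne 0$, then $e_K v=v_K\in N$; since every bijection $K\to K'$ between proper admissible $t$-sets lies in $RSp_{2n}$, $N$ meets every block in a nonzero $S_t$-submodule of the irreducible $V^\lambda$, so $N=M_\lambda$. Second, your dimension-count alternative is mildly circular, since the irreducible dimensions $\binom{n}{t}2^t f^\lambda$ are usually themselves extracted from the same structure theory; the cleaner way to pin the label is your own trace computation: $\chi_{M_\lambda}$ vanishes at $r_\mu$ for every $\mu$ of rank less than $t$, is nonzero at $e_t$, and equals $\ba{\chi}_\lambda(w_\mu)$ on rank-$t$ classes, and by Proposition \ref{charactertableuppertriangular} the only irreducible character with these properties is $\chi_\lambda$. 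With those adjustments your argument is a complete, self-contained proof of the quoted formula.
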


Note that $RSp_{2n}$ is also the Renner monoid of type $B_n$ by \cite[Theorem~12]{liAlgebraicMonoid}.

\subsection{Matrix Description for the Symplectic Renner Monoid} \label{SymplecMatrix}
Li, Li, and Cao show in \cite[Proposition~2.1]{li2008representations} that
\[RSp_{2n} = \{A \in R_{2n} \; | \; A P A^t = A^t P A = 0 \} \cup WSp_{2n}\]
where 
\[P = \begin{pmatrix}
0 & J_n \\
-J_n & 0
\end{pmatrix}, \qquad J_n = \begin{pmatrix} 0 & \dots &  & & 1 \\
\dots & &  & 1 &  \\
& & \dots &  & \\
& 1 &  &  & \dots \\
1 &&& \dots & 0\end{pmatrix}.\]

$J_n$ is the $n \times n$ anti-diagonal matrix of $1$'s, and we view $R_{2n}$ as a submonoid of $\text{Mat}_{2n}$. $WSp_{2n}\subset R_{2n}$, so $WSp_{2n}$ is a subgroup of $S_{2n}$. From this, we establish:

\begin{proposition}
\label{SymplecRook}
\[
RSp_{2n} \cong \{ A \in R_{2n} \; | \; A^t J_{2n} A = A J_{2n} A^t = 0 \text{ or } J_{2n} \}
\]
\end{proposition}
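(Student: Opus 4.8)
The plan is to transfer everything to the ambient matrix monoid $R_{2n}\subset \mathrm{Mat}_{2n}$ and compare the two quadratic conditions on a partial permutation matrix $A$. The starting point is the block identity $J_{2n} = \left(\begin{smallmatrix} 0 & J_n \\ J_n & 0\end{smallmatrix}\right)$, obtained by noting that $(J_{2n})_{kl}=1$ iff $k+l=2n+1$. This shows that $P$ and $J_{2n}$ have exactly the same support (both are supported on the anti-diagonal) and differ only by a global sign on the lower-left $n\times n$ block, so that $P_{kl} = \pm(J_{2n})_{kl}$ for every index pair $(k,l)$, with identical zero pattern.

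The first key step is a ``sign does not matter'' lemma for $\{0,1\}$-matrices. If $A\in R_{2n}$ is a partial permutation, each row of $A$ has at most one $1$; writing $k_i$ for the column of the unique $1$ in row $i$ (when it exists), one computes $(APA^t)_{ij} = P_{k_i,k_j}$ and $(AJ_{2n}A^t)_{ij} = (J_{2n})_{k_i,k_j}$, both being $0$ when $k_i$ or $k_j$ is undefined. By the sign observation these entries are equal up to sign, hence vanish simultaneously, so $APA^t = 0 \iff AJ_{2n}A^t = 0$; the identical argument applied to columns gives $A^tPA = 0 \iff A^tJ_{2n}A = 0$. Combined with the matrix description $RSp_{2n} = \{A : APA^t = A^tPA = 0\}\cup WSp_{2n}$ recalled above, this already identifies the non-unit part of $RSp_{2n}$ with $\{A\in R_{2n} : A^tJ_{2n}A = AJ_{2n}A^t = 0\}$.

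The second step treats the units, by reading $J_{2n}$ as the Gram matrix of the pairing $\langle e_i,e_j\rangle = \delta_{j,\overline{i}}$ attached to the bar involution $\overline{i}=2n+1-i$. For a genuine permutation matrix $A$ with underlying permutation $\sigma$ one gets $(A^tJ_{2n}A)_{jk} = (J_{2n})_{\sigma(j),\sigma(k)}$, which equals $(J_{2n})_{jk}$ for all $j,k$ precisely when $\sigma(\overline{j})=\overline{\sigma(j)}$ for all $j$, i.e.\ when $\sigma$ commutes with the bar involution. I would then check that commuting with the bar involution is equivalent to preserving admissible sets: testing on the two-element admissible sets $\{i,j\}$ with $j\neq\overline{i}$ gives one implication, and bijectivity of $\sigma$ supplies the converse. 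Thus $A^tJ_{2n}A = J_{2n}$ characterizes exactly $WSp_{2n}$, and since $A$ is orthogonal ($AA^t=I$) the condition $AJ_{2n}A^t=J_{2n}$ then follows automatically.

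Finally I would verify that the two cases do not interfere: a permutation matrix makes $A^tJ_{2n}A$ a rearrangement of $J_{2n}$ with $2n$ nonzero entries, so it is never $0$, while conversely $A^tJ_{2n}A = J_{2n}$ forces every index into the domain of $\sigma$ and hence forces $A$ to be a permutation. So the ``$0$'' condition selects precisely the non-units and the ``$J_{2n}$'' condition precisely the units, and assembling the two steps yields the claimed isomorphism. The step I expect to be most delicate is the unit case: one must correctly match the combinatorial definition of ``symplectic'' (preservation of admissible sets) with the algebraic bilinear-form condition $A^tJ_{2n}A=J_{2n}$, and keep the row/column conventions for the partial-permutation-to-matrix correspondence consistent throughout so that the two conditions $A^tJ_{2n}A$ and $AJ_{2n}A^t$ genuinely encode admissibility of the range and of the domain of $\sigma$.
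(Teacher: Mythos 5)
Your proposal is correct and follows essentially the same route as the paper's proof: the same sign-insensitivity argument showing $APA^t=0 \iff AJ_{2n}A^t=0$ for partial permutation matrices, and the same identification of the unit case, since your condition that $\sigma$ commutes with the bar involution is exactly the paper's $J_{2n}AJ_{2n}=A$ (with both arguments using two-element admissible sets and orthogonality $A^t=A^{-1}$). Your explicit rank check that the ``$0$'' and ``$J_{2n}$'' cases cannot overlap is a small point the paper leaves implicit, but it does not change the substance of the argument.
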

\begin{proof}
 We first show that
\[\{A \in R_{2n} \; | \; A^t P A = A P A^t = 0\} = \{A \in R_{2n} \; | \; A^t J_{2n} A = A J_{2n} A^t = 0\}\]
Indeed, note that $J_{2n} A^t = (b_{ij})$ is still an element of $R_{2n}$ by nature of having at most one entry in every row and column that is equal to $1$, so that the condition that
$$A J_{2n} A^t = 0 \iff \quad \forall i,j, \quad (A J_{2n} A^t)_{ij} = c_{ij} = \sum_{k = 1}^{2n} a_{ik} b_{kj} = 0 $$
means all the $a_{ik}b_{kj}$ are 0 (since all summands must be non-negative in the evaluation of $c_{ij}$). Clearly, if $d_{ij} := (A P A^t)_{ij}$, then $c_{ij} = \pm d_{ij}$, so that  $c_{ij} = 0 \iff d_{ij} = 0$. Thus $A J_{2n} A^t = 0 \iff A P A^t = 0$, and the non-full rank components of these sets coincide.

It suffices to show that
$$WSp_{2n} = \{A \in R_{2n} \; | \; A^t J_{2n} A = A J_{2n} A^t = J_{2n} \}$$
For $A \in WSp_{2n}$, we have that
$$A(k) = i_k \implies A(\overline{k}) = \overline{i_k}$$
for if not, then using our first definition of the symplectic rook monoid, $\{k, A^{-1}(\overline{i_k})\}$ would be an admissible set mapped to a non-admissible set. Yet note that
$$A^t J_{2n} A = A J_{2n} A^t = J_{2n} \iff A J_{2n} = J_{2n} A \iff J_{2n} A J_{2n} = A$$
having used the fact that $J_{2n}^{-1} = J_{2n}$ and $A^t = A^{-1}$ which follows because $A \in W$ must be full rank and hence orthogonal. However, $J_{2n} A J_{2n} = A$ is exactly the condition that $A(k) = i_k \iff A(\overline{k}) = \overline{i_k}$, as $J_{2n}$ is the permutation corresponding to $(1 \; \overline{1}) (2 \; \overline{2}) \cdots (n \; \overline{n})$ and hence conjugating a permutation matrix by $J_{2n}$ makes it so that $k \mapsto \overline{A(\overline{k})}$. Thus,
$$A = J_{2n} A J_{2n} \iff A(k) = \overline{A(\overline{k})} \iff \overline{A(k)} = A(\overline{k})$$
so every $A \in WSp_{2n}$ satisfies $A = J_{2n} A J_{2n}$. Similarly, matrices $A \in R_{2n}$ satisfying $A = J_{2n} A J_{2n}$ are full rank and map admissible sets to admissible sets. For if not, then there would exist
$$k, \; s \neq \overline{k} \; \text{ s.t. } \; A(k) = i_k, \quad A(s) = \overline{i_k} $$
contradicting the condition that $J_{2n} A J_{2n} = A$. 
\end{proof}

\begin{remark}

Li, Li, and Cao \cite[Corollary~2.3]{li2008representations} suggest an alternate equivalent description of the symplectic rook monoid:
$$ RSp_{2n} = \{A \in R_{2n} \; | \; A P A^t = A^t P A = 0 \text{ or } P\}.$$
Yet note that \[WSp_2 = \left\{\begin{pmatrix} 1&0\\0&1\end{pmatrix}, \begin{pmatrix} 0&1\\1&0\end{pmatrix}\right\},\] but
$$A = \begin{pmatrix} 
0 & 1 \\
1 & 0
\end{pmatrix}, P = \begin{pmatrix} 
0 & 1 \\
-1 & 0
\end{pmatrix} \implies A P A^t = \begin{pmatrix}
0 & -1 \\
1 & 0
\end{pmatrix} \ne 0 \text{ or } P,$$
which does not match the original definition. Our Proposition \ref{SymplecRook} is intended as a fix for this minor error. Note that this does not pose a significant problem for that paper, as the authors mostly use their definition in terms of admissible sets.

\end{remark}

\subsection{Decomposing Character Tables of Finite Inverse Semigroups} \label{decomposingchartable}

Steinberg \cite{steinberg} tells us that the character table $M$ of any finite inverse semigroup is invertible. Therefore, we can define the Solomon A- and B-matrices:

\begin{definition}
Let $M:=M(R)$ be the character table of the Renner monoid $R$. Let $Y:=Y(R)$ be the block diagonal matrix with blocks $M(W_e)$ for all $e\in\Lambda$, with rows and columns indexed the same way as $M$. Then we define the invertible matrices $A$ and $B$ as \[M = AY = YB.\]
\end{definition}

In type $A$, Solomon showed that these matrices are combinatorially interesting. For any partition $\lambda$, let $\lambda_{(i)}$ denote the number of parts of $\lambda$ of size $i$. For two partitions $\lambda,\mu$, let \[\binom{\lambda}{\mu} := \prod_{i\ge 1} \binom{\lambda_{(i)}}{\mu_{(i)}}.\]

\begin{proposition} \label{typeamatrices} \cite[Propositions~3.5,~3.11]{solomon2002representations}
Let $R=R_n$, $\lambda\in \mathcal{P}_l, \mu\in\mathcal{P}_m$. Then \[A_{\lambda,\mu} = \binom{\lambda}{\mu}, \hspace{40pt} B_{\lambda,\mu} = \begin{cases} 1, & \lambda\supseteq \mu, \text{ and $\lambda-\mu$ is a horizontal strip} \\ 0, & \text{otherwise}. \end{cases}\]
\end{proposition}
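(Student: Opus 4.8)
The plan is to extract both the $A$- and $B$-matrix formulas from the defining relations $M = AY = YB$ together with the explicit character formula for the rook monoid, which is the type-$A$ specialization of Proposition \ref{symplecticcharactertable}. Recall that for $R_n$ all the subgroups $W_{e_t}$ are symmetric groups $S_t$, so $Y$ is block-diagonal with blocks $M(S_t)$, the symmetric-group character tables indexed by partitions $\lambda \vdash t$. The rows and columns of $M$ are indexed by $\bigsqcup_{t} \mathcal{P}_t$, ordered so that larger $t$ (i.e.\ larger idempotent rank) sits lower, so by Proposition \ref{charactertableuppertriangular} the matrix $M$ is block upper-triangular with diagonal blocks equal to the $M(S_t)$. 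Consequently both $A = MY^{-1}$ and $B = Y^{-1}M$ are block upper-triangular with identity blocks on the diagonal; this immediately handles the $\lambda,\mu$ with $l = m$ and is the base case making the stated formulas consistent (note $\binom{\lambda}{\mu}=1$ when $\lambda=\mu$ and the horizontal-strip condition is vacuously satisfied then).

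For the $A$-matrix I would work from $M = AY$ written entrywise. Fixing $\mu \in \mathcal{P}_m$ and summing the rook-monoid character formula over the conjugacy-class representative $r_\mu = e_m w_\mu e_m$, Proposition \ref{symplecticcharactertable} (type $A$) gives $\chi_\lambda(r_\mu) = \sum_{K \in C(m, w_\mu)} \ba\chi_\lambda\big((w_\mu)_K\big)$, where the sum ranges over $w_\mu$-stable $m$-subsets $K$ of the support. The key combinatorial observation is that choosing such a $K$ amounts to selecting, for each cycle length $i$, some subset of the length-$i$ cycles of $w_\mu$; the restricted permutation $(w_\mu)_K$ then has a cycle type $\nu \subseteq \mu$ obtained by deleting cycles. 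Collecting terms by the resulting cycle type $\nu$, the number of $K$ giving a fixed $\nu$ is exactly $\prod_i \binom{\mu_{(i)}}{\nu_{(i)}} = \binom{\mu}{\nu}$. Thus $\chi_\lambda(r_\mu) = \sum_{\nu} \binom{\mu}{\nu}\, \ba\chi_\lambda(w_\nu)$, and comparing with $(AY)_{\lambda,\mu} = \sum_\nu A_{\lambda,\nu} (M(S_{|\nu|}))_{\nu,\mu}$ lets me read off $A_{\lambda,\mu} = \binom{\lambda}{\mu}$ after matching class-function data across the block structure. Here I would use the column orthogonality / invertibility of each $M(S_t)$ to justify that the coefficients are uniquely determined.

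For the $B$-matrix I would use $M = YB$, i.e.\ $B = Y^{-1}M$, and interpret $B$ representation-theoretically: as the introduction indicates (and as will be formalized in Proposition \ref{heckealgebramultiplicities}), the $B$-matrix records restriction multiplicities of the irreducible $R_n$-representations to the subgroups $W_e$. Concretely $B_{\lambda,\mu}$ should equal the multiplicity of the $S_m$-irreducible $\ba\chi_\mu$ inside the restriction to $S_m$ of the $R_n$-representation indexed by $\lambda \vdash l$. Branching from the rook monoid down to $S_m$ factors through the ordinary symmetric-group restriction $S_l \downarrow S_m$ together with the rank-lowering that occurs in passing from the idempotent $e_l$ to $e_m$; by the classical branching rule this restriction multiplicity is governed by a Pieri-type rule, giving exactly the condition that $\mu \subseteq \lambda$ with $\lambda - \mu$ a horizontal strip (multiplicity $1$ when this holds and $0$ otherwise). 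I expect this $B$-matrix step to be the main obstacle, since it requires identifying the precise branching mechanism for rook-monoid representations and verifying it collapses to a multiplicity-free horizontal-strip rule rather than a general Littlewood–Richardson sum; the cleanest route is to invert the already-established $A = \left(\binom{\lambda}{\mu}\right)$ relation via the identity $B = Y^{-1} A Y$ and check that the resulting entries satisfy the stated closed form, using the known generating-function inverse of the binomial matrix to produce the horizontal-strip characterization.
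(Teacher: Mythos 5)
First, a point of orientation: the paper does not prove Proposition \ref{typeamatrices} at all --- it is quoted from Solomon \cite[Propositions~3.5,~3.11]{solomon2002representations}. The closest the paper comes to a proof is (i) Lemma \ref{admissiblecycletypelemma} together with the example following it, which rederives the $A$-part via $A_{\alpha,\mu}=|S_{\mu,r_\alpha}|$ and the count $\binom{\alpha}{\mu}$ of choices of cycles, and (ii) Proposition \ref{bmatrixmultiplicities} plus the restriction-equals-induction statement (Proposition \ref{pierirulegeneral}, whose proof the authors remark ``yields Solomon's result for type $A_n$''), which combined with the Pieri rule gives the $B$-part. Your $A$-matrix argument is essentially identical to (i) and is correct: you evaluate $\chi_\lambda(r_\mu)$ by the character formula, group the stable subsets $K$ by the cycle type $\nu$ of the restricted permutation, count $\binom{\mu}{\nu}$ subsets per type, and read off the coefficients by linear independence of the $S_t$-characters. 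Two bookkeeping slips should be fixed: the subsets $K$ must have size $l=|\lambda|$ (the rank of the character's index), not $m$, for $\ba{\chi}_\lambda((w_\mu)_K)$ to make sense; and since rows of $M$ are Munn classes, the comparison is with $(AY)_{\mu,\lambda}$, giving $A_{\mu,\nu}=\binom{\mu}{\nu}$, which is the stated formula after renaming.

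The $B$-part, however, has a genuine gap, and you correctly sense where it is. By Proposition \ref{bmatrixmultiplicities}, $B_{\lambda,\mu}$ is the multiplicity of $\ba{\chi}_\lambda$ in $\Res^{R_n}_{W_\lambda}\chi_\mu$ --- note you have the roles reversed: it is the monoid character $\chi_\mu$ (with $\mu\vdash m$) restricted to the \emph{larger} group $W_\lambda\cong S_l$, not $\chi_\lambda$ restricted to $S_m$. The entire content of the horizontal-strip formula is then the identity $\Res^{R_n}_{S_l}\chi_\mu=\Ind_{S_m\times S_{l-m}}^{S_l}(\ba{\chi}_\mu\boxtimes\mathbf{1})$, after which the Pieri rule (multiplicity-freeness of $c^{\lambda}_{\mu,(l-m)}$) finishes the argument. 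Your sentence that the branching ``factors through the ordinary symmetric-group restriction together with the rank-lowering'' asserts precisely this identity without proving it; the actual proof (Solomon's, reproduced in type $C$ as Proposition \ref{pierirulegeneral}) uses the character formula to identify the stable subsets $K\in C(m,\sigma)$ with coset representatives of $S_l/(S_m\times S_{l-m})$ and rewrites the character sum as an induced character. Your proposed fallback does not close the gap either: $B=Y^{-1}AY$ is a conjugation by the block of symmetric-group character tables, not an inversion of the binomial matrix (the latter would compute $A^{-1}$, which is not what is needed), and evaluating $(Y^{-1}AY)_{\lambda,\mu}=\sum_{\alpha,\beta}z_\alpha^{-1}\ba{\chi}_\lambda(w_\alpha)\binom{\alpha}{\beta}\ba{\chi}_\mu(w_\beta)$ via second orthogonality (as in Corollary \ref{groupcharacterresult}) requires a symmetric-function identity that is exactly the Pieri rule in disguise. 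So the matrix route repackages the missing lemma rather than avoiding it; to complete the proof you must actually establish the restriction-as-induction step.
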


We will obtain similar (although more complicated) combinatorial descriptions for the A- and B-matrices in the case where $R = RSp_{2n}$ (Theorem \ref{amatrixcomputation}, Corollary \ref{bmatrixcomputation}). 

The B-matrix has a representation theoretic interpretation, due in type $A$ to Solomon in the proof of \cite[Proposition~3.11]{solomon2002representations}:

\begin{proposition} \label{bmatrixmultiplicities}
Let $R$ be a Renner monoid, with B-matrix $B$. Then \[B_{\lambda,\mu} = \dim\Hom\left(Res^R_{W_\lambda} \chi_\mu,\; \ba{\chi}_\lambda\right),\] the multiplicity of $\chi_\lambda$ in $Res^R_{W_\lambda} \chi_\mu$.
\end{proposition}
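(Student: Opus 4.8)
The plan is to read the defining identity $M = YB$ off one ``column'' at a time and to recognize the resulting column of $B$ as a list of restriction multiplicities. Since $Y$ is block diagonal and invertible we may write $B = Y^{-1}M$; write $H_e := M(W_e)$ for the diagonal block of $Y$ indexed by $e\in\Lambda$, i.e.\ the character table of the group $W_e$, and throughout read $W_\lambda$ in the statement as the group $W_e$ for which $\lambda\in\mathcal{P}(W_e)$. Fix $\mu$ and consider the block of entries of $M$ indexed by the irreducible $R$-character $\chi_\mu$ and the Munn classes $r_\delta$ with $\delta\in\mathcal{P}(W_e)$; these entries are the values $\chi_\mu(r_\delta)$. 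The first point I would record is that each representative $r_\delta = ew_\delta e$ in fact lies in the subsemigroup $W_e = eW(e)e\subseteq R$ and represents the conjugacy class of the group $W_e$ labelled by $\delta$. Hence this block of $M$ is exactly the table of values of the class function $\Res^R_{W_e}\chi_\mu$ on the conjugacy classes of $W_e$.

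The representation-theoretic input I would establish next is that $\Res^R_{W_e}\chi_\mu$ is an \emph{honest} character of the group $W_e$. If $\rho_\mu$ affords $\chi_\mu$ on a space $V$, then restricting $\rho_\mu$ along the inclusion $W_e\hookrightarrow R$ gives a representation of $W_e$ whose identity element $e$ acts as the idempotent $\rho_\mu(e)$; since every $w\in W_e$ satisfies $w = ewe$, the operator $\rho_\mu(w)$ preserves and acts invertibly on the subspace $\rho_\mu(e)V$ and kills a complement. Thus $\Res^R_{W_e}\chi_\mu$ is the character of the genuine $W_e$-representation on $\rho_\mu(e)V$, and it decomposes as $\sum_{\lambda\in\mathcal{P}(W_e)} m_\lambda\,\ba{\chi}_\lambda$ with $m_\lambda = \dim\Hom\!\left(\Res^R_{W_e}\chi_\mu,\ba{\chi}_\lambda\right)\in\Z_{\ge 0}$ by Schur's lemma. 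This is the step I expect to be the main obstacle: one must check that ``restriction'' of a monoid representation to a subgroup whose identity $e$ is \emph{not} the identity of $R$ is a well-behaved, honest group representation, so that $\dim\Hom$ legitimately computes the multiplicity $m_\lambda$ rather than a merely formal quantity.

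Evaluating the decomposition at $w_\delta$ for each $\delta\in\mathcal{P}(W_e)$ yields the linear system
\[
\chi_\mu(r_\delta) \;=\; \sum_{\lambda\in\mathcal{P}(W_e)} m_\lambda\,\ba{\chi}_\lambda(w_\delta), \qquad \delta\in\mathcal{P}(W_e).
\]
I would then observe that this is precisely the $e$-block of the identity $M = YB$ restricted to the character $\chi_\mu$: the left-hand side is the corresponding entry of $M$, while on the right $\sum_{\lambda}\ba{\chi}_\lambda(w_\delta)\,B_{\lambda,\mu}$ is the same entry of $YB$, because $Y$ is block diagonal and only its block $H_e$, with entries $\ba{\chi}_\lambda(w_\delta)$, contributes. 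Comparing the two expressions and using the linear independence of the irreducible characters $\ba{\chi}_\lambda$ of $W_e$ (equivalently, the invertibility of $H_e$), I match coefficients to conclude $m_\lambda = B_{\lambda,\mu}$ for every $\lambda\in\mathcal{P}(W_e)$, which is exactly the desired identity.

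The block structure that makes this matching consistent across all $e\in\Lambda$ is supplied by Proposition \ref{charactertableuppertriangular}: the diagonal blocks of $M$ are the character tables $H_e$ of the groups $W_e$, matching the diagonal blocks of $Y$, which is what renders $B = Y^{-1}M$ block upper unitriangular and guarantees that the coefficient comparison above decouples block by block. Logically, no positivity of the $m_\lambda$ is needed for the numerical equality $m_\lambda = B_{\lambda,\mu}$; establishing that $\Res^R_{W_e}\chi_\mu$ is a genuine character is what upgrades the right-hand side from a formal inner product to an honest $\Hom$-dimension, and is therefore the crux of the argument.
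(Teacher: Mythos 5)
Your proposal is correct and follows essentially the same route as the paper: evaluate the identity $M = YB$ at the Munn class representatives $r_\delta = ew_\delta e$ for a fixed block $e\in\Lambda$, compare with the decomposition of $\Res^R_{W_e}\chi_\mu$ into irreducible characters of $W_e$, and conclude by linear independence of those characters (equivalently, invertibility of the block $H_e$). The only difference is that you explicitly verify that $\Res^R_{W_e}\chi_\mu$ is an honest character of $W_e$ via the projection $\rho_\mu(e)$ --- a point the paper's proof leaves implicit --- which is a worthwhile clarification but not a different argument.
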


\begin{proof}
Since $M=YB$, for any $\nu\in\mathcal{Q}(R)$, \[\chi_\mu(r_\nu) = \sum_{\lambda\in\mathcal{Q}(R)} Y_{\nu\lambda} B_{\lambda\mu} = \sum_{\lambda\in\mathcal{P}(W_\nu)} \ba{\chi}_\lambda(w_\nu) B_{\lambda\mu}.\] On the other hand, \[\chi_\mu(r_\nu) = \left(\text{Res}^R_{W_\nu} \chi_\mu\right)(w_\nu) = \sum_{\lambda\in\mathcal{P}(W_\nu)} \ba{\chi}_\lambda(w_\nu) \dim\Hom\left(\text{Res}^R_{W_\nu} \chi_\mu, \; \ba{\chi}_\lambda\right).\]

Now, by linear independence of characters, and noting that $W_\nu = W_\lambda$, the result follows.
\end{proof}

Our formulas for the A- and B-matrices for $RSP_{2n}$ in Sections \ref{amatrix} and \ref{restrictionsection} give two combinatorial interpretations of the $RSp_{2n}$ character table.

\section{Characters of Hecke Algebras} \label{heckecharacterssection}

In this section, we give a procedure to compute the Renner monoid Hecke algebra character table for any Renner monoid $R$. This method uses the B-matrix of $R$, as well as the character tables of the $W_e$. Dieng, Halverson, and Poladian \cite{Dieng2003} have computed the character table for the Renner monoid Hecke algebra in Type $A$. In Section \ref{typecheckechartable}, we do the same for type $C$.

Our method for computing the character tables for Renner monoids has been the Solomon decomposition \[M = AY = YB.\] In this section, we consider the Hecke algebra version of this decomposition, \[M_q = Y_qB_q.\] $B$ encodes multiplicities of restrictions, which we will use to show that $B_q=B$. Since $M$ and $Y_q$ are known for all types, we can compute $B$, and thus the character table of any Renner monoid Hecke algebra by a simple matrix multiplication. 

In Section \ref{heckebmatrix}, we prove the formula $M_q = Y_qB$, and in Section \ref{nonstandardelements}, we show that the character values of every element of the Hecke algebra can be given as a linear combination of the values on so called ``standard elements''. Later on, in Section \ref{typecheckechartable}, we use our method to explicitly compute the character table of $\mathcal{H}(RSp_4)$.

\subsection{Character Values on Standard Elements of $\mathcal{H}(R)$: The B-Matrix}\label{heckebmatrix}

Let $R$ be a Renner monoid, and let $\mathcal{H}(R)$ be the (one-parameter) generic Hecke algebra of $R$ in the sense of \cite[Definition~1.30]{godelle}. In particular, for some finite algebraic extension $K$ of $\C(q)$, $\mathcal{H}(R)$ is the $K$-algebra with basis $\{T_r\;|\;r\in R\}$ and relations \[T_xT_r = \begin{cases} T_{xr}, & \text{if } x\in S,\; l(xr) = l(r)+1 \\ qT_r, & \text{if } x\in S,\; l(xr) = l(r) \\ (q-1)T_r + qT_{xr}, & \text{if } x\in S,\; l(xr) = l(r)-1 \\ q^{l(r)-l(xr)}T_{xr}, & \text{if } x\in \Lambda.\end{cases}\]

Since $W_e$ is isomorphic (as a group) to $W^*(e)$, $\mathcal{H}(W_e)$ includes into $\mathcal{H}(R)$ via $T_w\mapsto T_{ew}$. Clearly this is not the only possible inclusion, as $T_w\mapsto hT_{ew}h^{-1}$ is also an inclusion for any invertible element $h\in\mathcal{H}(R)$.

By the Putcha-Tits deformation theorem \cite[p.~347]{solomon1995introduction}, $\mathcal{H}(R)$ has the same representation theory as $R$. Every irreducible character $\chi$ of $R$ corresponds to an irreducible character of $\mathcal{H}(R)$, and we write $\chi^*$ for this character. Thus we see that the B-matrices of the Renner monoid and Hecke algebra are equal:

\begin{proposition} \label{heckealgebramultiplicities}
$B_q = B$, the B-matrix of $R$.
\end{proposition}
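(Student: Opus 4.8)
The plan is to prove the $q$-analogue of Proposition \ref{bmatrixmultiplicities}: that $(B_q)_{\lambda\mu}$ is the multiplicity of $\ba{\chi}^*_\lambda$ in the restriction $\text{Res}^{\mathcal{H}(R)}_{\mathcal{H}(W_\lambda)}\chi^*_\mu$, and then to invoke the Putcha-Tits deformation theorem to identify this multiplicity with its group counterpart $B_{\lambda\mu}$. The whole argument is thus split into one step that is formal linear algebra and one step that is representation-theoretic input.

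First I would reproduce the linear-algebra argument of Proposition \ref{bmatrixmultiplicities} verbatim in the Hecke setting. Starting from the defining equation $M_q = Y_q B_q$ and the block-diagonal form of $Y_q$ (whose $(\nu,\lambda)$ entry is $\ba{\chi}^*_\lambda$ evaluated on the standard element indexing the corresponding column of $H^*_\nu$, and vanishes unless $\lambda \in \mathcal{P}(W_\nu)$), I obtain $\chi^*_\mu(T_\nu) = \sum_{\lambda \in \mathcal{P}(W_\nu)} \ba{\chi}^*_\lambda(\cdot)\,(B_q)_{\lambda\mu}$, where $T_\nu$ is the $\nu$-standard element of $\mathcal{H}(R)$. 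The key point is that, because the standard elements of $\mathcal{H}(R)$ are by construction the images under $T_w \mapsto T_{ew}$ of the elements indexing the columns of the block $H^*_\nu$, the value $\chi^*_\mu(T_\nu)$ equals $\text{Res}^{\mathcal{H}(R)}_{\mathcal{H}(W_\nu)}\chi^*_\mu$ evaluated on that same element of $\mathcal{H}(W_\nu)$. Decomposing the restricted character into irreducible $\mathcal{H}(W_\nu)$-characters and comparing with the previous expression, linear independence of the irreducible Hecke characters of $\mathcal{H}(W_\nu)$ forces $(B_q)_{\lambda\mu} = \dim\Hom\!\left(\text{Res}^{\mathcal{H}(R)}_{\mathcal{H}(W_\lambda)}\chi^*_\mu,\, \ba{\chi}^*_\lambda\right)$, using $W_\nu = W_\lambda$ exactly as in the group case.

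Second, I would invoke deformation-invariance to finish. The inclusion $T_w \mapsto T_{ew}$ realizes $\mathcal{H}(W_\lambda)$ as a subalgebra of $\mathcal{H}(R)$, and this inclusion deforms the semigroup-algebra inclusion $K[W_\lambda] \hookrightarrow K[R]$ recovered at $q = 1$. By the Putcha-Tits deformation theorem both $\mathcal{H}(R)$ and $\mathcal{H}(W_\lambda)$ are split semisimple over $K$ with simple modules indexed as for the group algebras they deform, and the bijection $\chi \leftrightarrow \chi^*$ is compatible with the inclusion. Since restriction commutes with specialization and the decomposition map of a split-semisimple-to-split-semisimple deformation is the identity, the restriction multiplicities are independent of $q$; specializing at $q=1$ identifies $\dim\Hom\!\left(\text{Res}^{\mathcal{H}(R)}_{\mathcal{H}(W_\lambda)}\chi^*_\mu, \ba{\chi}^*_\lambda\right)$ with $\dim\Hom\!\left(\text{Res}^{R}_{W_\lambda}\chi_\mu, \ba{\chi}_\lambda\right) = B_{\lambda\mu}$ by Proposition \ref{bmatrixmultiplicities}. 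Hence $B_q = B$.

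The main obstacle I anticipate is making this deformation-invariance step fully rigorous: one must check that Tits' deformation theorem applies simultaneously to the pair $\left(\mathcal{H}(R), \mathcal{H}(W_\lambda)\right)$ together with their $q=1$ specializations, i.e. that both algebras and their inclusion arise from a single integral form that is flat over the coefficient ring, so that the restriction functor intertwines the two deformation correspondences and constancy of decomposition numbers can legitimately be applied to it. The cleanest route is to cite the standard form of Tits' deformation theorem (for instance as in Geck-Pfeiffer) and verify the flatness and semisimplicity hypotheses for both $\mathcal{H}(R)$ and $\mathcal{H}(W_\lambda)$; the remaining bookkeeping is exactly the character computation carried out in the first step.
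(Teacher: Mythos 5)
Your proposal matches the paper's proof essentially step for step: the paper likewise observes that the linear-algebra argument of Proposition \ref{bmatrixmultiplicities} applies verbatim to $\mathcal{H}(R)$ to give $(B_q)_{\lambda\mu} = \dim\Hom\bigl(\Res^{\mathcal{H}(R)}_{\mathcal{H}(W_\lambda)}\chi^*_\mu,\,\ba{\chi}^*_\lambda\bigr)$, and then identifies this with $B_{\lambda\mu}$ via the Putcha--Tits isomorphism $\mathcal{H}(R)\cong K[R]$. Your second step is in fact more careful than the paper's one-line appeal to this isomorphism, since you flag the genuine (if standard) point that the deformation must be compatible with the subalgebra inclusion $\mathcal{H}(W_\lambda)\hookrightarrow\mathcal{H}(R)$ for the restriction multiplicities to transfer.
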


\begin{proof}
By Proposition \ref{bmatrixmultiplicities}, $B_{\lambda,\mu} = \dim\Hom\left(Res^R_{W_\lambda} \chi_\mu,\ba{\chi}_\lambda\right)$. Note that the proof of Proposition \ref{bmatrixmultiplicities} applies also to $\mathcal{H}(R)$, so \[(B_q)_{\lambda,\mu} = \dim\Hom\left(Res^{\mathcal{H}(R)}_{\mathcal{H}(W_\lambda)} \chi^*_\mu,\; \ba{\chi}^*_\lambda\right)\]

Since $\mathcal{H}(R)\cong K[R]$, these multiplicities are the same, so \[B_{\lambda,\mu} = \dim\Hom\left(\Res^R_{W_\lambda} \chi_\mu,\; \ba{\chi}_\lambda\right) = \dim\Hom\left(\Res^{\mathcal{H}(R)}_{\mathcal{H}(W_\lambda)} \chi^*_\mu,\; \ba{\chi}^*_\lambda\right) = (B_q)_{\lambda,\mu}.\]
\end{proof}

For $\mu\in\mathcal{Q}$, let $\ba{\chi}_\mu^*$ be the character of the group Hecke algebra $\mathcal{H}(W_e)$ corresponding to $\mu$.

\begin{cor} \label{heckealgebramultiplicitiescorollary}
If $h\in\mathcal{H}(W_e)\subset\mathcal{H}(R)$, then \[\chi_\lambda^*(h) = \Res^{\mathcal{H}(R)}_{\mathcal{H}(W_e)}\chi^*(h) = \sum_{\mu\in\mathcal{P}(W_e)} B_{\mu\lambda} \ba{\chi}_\mu^*(h).\] \qed
\end{cor}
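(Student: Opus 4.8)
The plan is to read the statement off from the character-level decomposition that already underlies Proposition \ref{heckealgebramultiplicities}, specialized to elements lying in the subalgebra $\mathcal{H}(W_e)$. The first equality in the display is purely definitional: since $\mathcal{H}(W_e)$ embeds into $\mathcal{H}(R)$ via $T_w \mapsto T_{ew}$, the irreducible $\mathcal{H}(R)$-module affording $\chi_\lambda^*$ restricts to an $\mathcal{H}(W_e)$-module, and evaluating the character of that restricted module at an element $h \in \mathcal{H}(W_e)$ is by construction the same as evaluating $\chi_\lambda^*$ at $h$. So all the content lies in the second equality.

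For the second equality, I would first record that $\mathcal{H}(R) \cong K[R]$ is (split) semisimple by the Putcha--Tits deformation theorem for generic $q$, so the restricted module decomposes as a direct sum of irreducible $\mathcal{H}(W_e)$-modules, and its character is a nonnegative-integer combination of the $\ba{\chi}_\mu^*$, $\mu \in \mathcal{P}(W_e)$:
\[
\Res^{\mathcal{H}(R)}_{\mathcal{H}(W_e)}\chi_\lambda^* = \sum_{\mu \in \mathcal{P}(W_e)} m_{\mu\lambda}\,\ba{\chi}_\mu^*, \qquad m_{\mu\lambda} = \dim \Hom\!\left(\Res^{\mathcal{H}(R)}_{\mathcal{H}(W_e)}\chi_\lambda^*,\ \ba{\chi}_\mu^*\right).
\]
Because $W_\mu = W_e$ for every $\mu \in \mathcal{P}(W_e)$, the multiplicity $m_{\mu\lambda}$ is precisely the quantity $(B_q)_{\mu\lambda}$ furnished by the Hecke-algebra analogue of Proposition \ref{bmatrixmultiplicities}, which (as already noted inside the proof of Proposition \ref{heckealgebramultiplicities}) applies verbatim to $\mathcal{H}(R)$. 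Invoking Proposition \ref{heckealgebramultiplicities} to replace $B_q$ by $B$ gives $m_{\mu\lambda} = B_{\mu\lambda}$, and evaluating both sides of the displayed character identity at $h$ yields the claim.

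The only point requiring genuine care — the closest thing to an obstacle — is the index bookkeeping: I must confirm that the multiplicities are governed by $B_{\mu\lambda}$ in the orientation stated, not by its transpose. Here I would check against the convention of Proposition \ref{bmatrixmultiplicities}, in which $B_{\mu\lambda}$ records the multiplicity of $\ba{\chi}_\mu$ inside $\Res^R_{W_\mu}\chi_\lambda$, and verify that this orientation is preserved when transporting through the isomorphism $\mathcal{H}(R) \cong K[R]$. Once the conventions are aligned, no further computation is needed: the corollary is a one-line specialization of the semisimple restriction formula together with the identity $B_q = B$.
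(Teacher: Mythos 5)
Your proposal is correct and follows exactly the route the paper intends: the corollary is stated with an immediate \(\qed\) because it is a direct specialization of Proposition \ref{heckealgebramultiplicities} (via the Hecke-algebra analogue of Proposition \ref{bmatrixmultiplicities}), which is precisely your argument, and your index-orientation check ($B_{\mu\lambda}$ as the multiplicity of $\ba{\chi}_\mu^*$ in the restriction of $\chi_\lambda^*$, with $W_\mu = W_e$) matches the paper's convention.
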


The rows of the character table for $R$ are indexed by Munn classes; we want to create a Hecke algebra character table indexed by these same Munn classes. We do this by choosing row representatives, which we call ``standard elements'', such that the Hecke algebra character table becomes the monoid character table under the specialization $q\mapsto 1$. Moreover, we want these representatives to be of the form $T_\lambda := T_{r_\lambda}$, where $r_\lambda\in R$ is an element of the Munn class $\lambda$. This construction is done for group Hecke algebras in \cite[Section~8.2]{geck2000characters}.

Let $\lambda\in\mathcal{Q}(R)$. Notice that from Corollary \ref{heckealgebramultiplicitiescorollary}, $T_\lambda\in\mathcal{H}(R)$ will be a standard element if it is a standard element in $\mathcal{H}(W_\lambda)$. By \cite[Definition~8.2.9]{geck2000characters}, it is sufficient to take $r_\lambda = ewe$, where $w\in W_\lambda = W_e$ is any element of minimal length in its conjugacy class (because $W_\lambda$ is a parabolic subgroup of $W$, this element will indeed be in $W_\lambda$).

We summarize the above work as follows:

\begin{definition} \label{charactertabledefinition}
Let $R$ be a Renner monoid with Hecke algebra $\mathcal{H}(R)$. Then the character table of $\mathcal{H}(R)$ is the matrix $M_q$, where for $\lambda,\mu\in\mathcal{Q}(R)$, \[(M_q)_{\lambda\mu} = \chi_\mu^*(T_\lambda).\]
\end{definition}

\begin{theorem} \label{heckealgebracharactertable}
Let $Y_q$ be the block diagonal matrix with blocks $M(\mathcal{H}(W_e))$ for all $e\in\Lambda$, and let $B$ be the B-matrix of $R$. Then \[M_q = Y_qB.\] \qed
\end{theorem}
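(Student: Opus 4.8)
The plan is to show that $M_q = Y_q B$ entrywise by computing $(M_q)_{\nu\mu} = \chi_\mu^*(T_\nu)$ for $\nu,\mu\in\mathcal{Q}(R)$ and matching it against $(Y_q B)_{\nu\mu}$. The key point is that the standard elements $T_\nu$ were chosen precisely so that $T_\nu$ lies in the subalgebra $\mathcal{H}(W_\nu)\subset\mathcal{H}(R)$: by the discussion preceding Definition \ref{charactertabledefinition}, $r_\nu = ewe$ with $w\in W_e=W_\nu$ of minimal length in its conjugacy class, and under the inclusion $\mathcal{H}(W_e)\hookrightarrow\mathcal{H}(R)$ given by $T_w\mapsto T_{ew}$, the element $T_\nu$ is identified with a standard element of the group Hecke algebra $\mathcal{H}(W_e)$. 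This is exactly the hypothesis needed to invoke Corollary \ref{heckealgebramultiplicitiescorollary}.

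First I would apply Corollary \ref{heckealgebramultiplicitiescorollary} with $h = T_\nu\in\mathcal{H}(W_e)$, which gives
\[
\chi_\mu^*(T_\nu) = \sum_{\lambda\in\mathcal{P}(W_e)} B_{\lambda\mu}\,\ba{\chi}_\lambda^*(T_\nu).
\]
Next I would identify the factor $\ba{\chi}_\lambda^*(T_\nu)$ with an entry of $Y_q$. Since $T_\nu$ is the standard element of $\mathcal{H}(W_e)$ attached to the class $\nu$, the value $\ba{\chi}_\lambda^*(T_\nu)$ is precisely the $(\nu,\lambda)$ entry of the character table $M(\mathcal{H}(W_e))$, which sits as the block of $Y_q$ indexed by $e$; that is, $(Y_q)_{\nu\lambda} = \ba{\chi}_\lambda^*(T_\nu)$ for $\lambda\in\mathcal{P}(W_e)$, and $(Y_q)_{\nu\lambda}=0$ otherwise by block-diagonality. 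Substituting, the sum becomes $\sum_{\lambda}(Y_q)_{\nu\lambda}B_{\lambda\mu} = (Y_q B)_{\nu\mu}$, as desired.

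The main thing to verify carefully is the bookkeeping of the block structure: I must check that restricting the index set of the sum from $\mathcal{Q}(R)$ to $\mathcal{P}(W_e)$ is harmless because $(Y_q)_{\nu\lambda}$ vanishes whenever $\lambda$ and $\nu$ belong to different idempotents (this is the defining block-diagonal shape of $Y_q$, matching the shape of $Y$), and to confirm that the row index $\nu$ of $Y_q$ is consistent with the conjugacy-class labelling used in Corollary \ref{heckealgebramultiplicitiescorollary}. Both follow from the fact that $Y_q$ and $M_q$ are indexed by the same ordered set $\mathcal{Q}(R)$ as in the monoid case, and the $q\mapsto 1$ specialization argument ensures the standard elements $T_\nu$ are the correct row representatives. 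I do not expect any genuine obstacle here: once the standard elements are placed inside $\mathcal{H}(W_\nu)$ and Corollary \ref{heckealgebramultiplicitiescorollary} is in hand, the identity is a direct matrix-multiplication computation, entirely parallel to the monoid identity $M = YB$.
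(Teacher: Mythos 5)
Your proposal is correct and takes essentially the same route as the paper: the theorem carries a \qed in the statement precisely because it is the entrywise combination of Corollary \ref{heckealgebramultiplicitiescorollary} (with $B_q=B$ from Proposition \ref{heckealgebramultiplicities}) applied to the standard elements $T_\nu\in\mathcal{H}(W_\nu)$, together with the identification $\ba{\chi}_\lambda^*(T_\nu)=(Y_q)_{\nu\lambda}$ and the block-diagonal vanishing, exactly as you write.
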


Theorem \ref{heckealgebracharactertable} gives an algorithm to compute the Hecke algebra character table for any Renner monoid. The character table for the group Hecke algebra has been computed for every type, with important work done by Starkey, Pfeiffer, Geck, and others \cite[\S 9-11]{geck2000characters}; so $Y_q$ can be obtained. In addition, Li, Li, and Cao \cite{liRennerMonoid} have computed the character table for any Renner monoid, and so the formula $B = Y^{-1}M$ can be used to work backwards and obtain the $B$-matrix in the cases where an explicit formula is not already known.

The next subsection shows that we can compute every character value of $\mathcal{H}(R)$ from the character table, and the remaining sections are devoted to enacting our program for the type $C$ Renner monoid $RSp_{2n}$.

\subsection{Non-Standard Elements} \label{nonstandardelements}

In this section, we prove that Definition \ref{charactertabledefinition} is truly a character table, in the sense that we can obtain the character values on any element of $\mathcal{H}$ by taking linear combinations of the rows of the character table.

In other words, we prove the following:

\begin{theorem} \label{nonstandardelementstheorem}
Let $r\in R$, and let $\chi^*$ be an irreducible character of $\mathcal{H}(R)$. Then $\chi^*(T_r)$ can be expressed as a linear combination \[\chi^*(T_r) = \sum_{\lambda\in\mathcal{Q}(R)} c_{r,\lambda} \chi^*(T_\lambda)\] of the character values $\chi^*(T_\lambda)$, where the coefficients $c_{r,\lambda}$ do not depend on $\chi$.
\end{theorem}

Dieng, Halverson, and Poladian used a recursive argument to prove this result for type $A$ in \cite{Dieng2003}, and we use a similar argument here. For $h,h'\in\mathcal{H}(R)$, we say that $h$ and $h'$ are equivalent and write $h\equiv h'$ if $h$ and $h'$ become equal in the quotient $\mathcal{H}(R)/[\mathcal{H}(R),\mathcal{H}(R)]$. Since characters are trace functions, $h\equiv h'$ means that $\chi(h) = \chi(h')$ for all characters of $\mathcal{H}(R)$.

Then, Theorem \ref{nonstandardelementstheorem} follows from the following result.

\begin{proposition} \label{stdeltlinearcombination}
Every basis element $T_r$ of $\mathcal{H}$ is equivalent to a linear combination of standard elements.
\end{proposition}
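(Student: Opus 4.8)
The plan is to induct on the length $l(r)$, using the defining Hecke relations together with the cyclicity $T_aT_b\equiv T_bT_a$ to collapse an arbitrary $T_r$ onto the sandwiched elements $T_{ewe}$, and then to invoke the known group-case reduction inside each $\mathcal{H}(W_e)$. First I would put $r$ into the canonical form $r=w_1ew_2$ of Proposition \ref{uniqueidempotentproposition}, where $l(r)=l(w_1)+l(w_2)$. Because this factorization is length-additive, expanding $w_1,w_2$ into reduced words and applying the first and fourth relations repeatedly yields the factorization $T_r=T_{w_1}T_eT_{w_2}$ into generators. The idempotency $T_e^2=T_e$ (the fourth relation with $l(e)-l(e^2)=0$) together with cyclicity then gives
\[ T_r \;=\; (T_{w_1}T_e)(T_eT_{w_2}) \;\equiv\; (T_eT_{w_2})(T_{w_1}T_e) \;=\; T_eT_{w_2}T_{w_1}T_e. \]
Expanding $T_{w_2}T_{w_1}=T_{w_2w_1}+(\text{strictly shorter terms})$ in $\mathcal{H}(W)$, it suffices to understand the sandwiched products $T_eT_uT_e$ for $u\in W$.

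The key is an absorption identity. Applying the fourth relation with $e\in\Lambda$ on the left gives $T_eT_u=q^{\,l(u)-l(eu)}T_{eu}$, and the right-handed form of the same relation (supplied by the length-preserving anti-involution $T_r\mapsto T_{r^{*}}$ of $\mathcal{H}(R)$, or proved directly by the symmetric computation) gives $T_{eu}T_e=q^{\,l(eu)-l(eue)}T_{eue}$, so that
\[ T_eT_uT_e \;=\; q^{\,l(u)-l(eue)}\,T_{eue}, \]
an \emph{honest} identity with nonnegative exponent since $l(eue)\le l(eu)\le l(u)$. Hence $T_r$ equals, modulo commutators, a $q$-scalar combination of basis elements $T_{eue}$ with $u$ ranging over the support of $T_{w_2}T_{w_1}$, the top contribution coming from $u=w_2w_1$.

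Now I would run the induction on each $T_{eue}$ via the trichotomy determined by the idempotent $f\le e$ associated to $eue$. Using that $f=e$ exactly when $u$ stabilizes $e$, i.e.\ $u\in W(e)$, and writing $u=u^{*}u_{*}$ with $u^{*}\in W^{*}(e)$, $u_{*}\in W_{*}(e)$ so that $eue=eu^{*}e$: if $f<e$ the idempotent strictly drops and I finish by an outer induction on the rank of the idempotent; if $f=e$ but $u_{*}$ is nontrivial (or $u$ is a strictly shorter term of the expansion) then $l(eue)<l(r)$ and I finish by the inner induction on length; and in the remaining case $u=u^{*}\in W_e$, the element $eue=u$ lies in $W_e$, so $T_{eue}$ is the image of $T_{u}\in\mathcal{H}(W_e)$ under the inclusion $T_w\mapsto T_{ew}$. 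There I reduce $u$ to a minimal-length representative of its $W_e$-conjugacy class by the group-case result of \cite[\S 8.2]{geck2000characters}; since the inclusion is an algebra homomorphism it carries commutators to commutators, so the resulting equivalence persists in $\mathcal{H}(R)$, and the minimal-length representatives are precisely the standard elements.

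I expect the main obstacle to be the termination bookkeeping rather than any single identity. The delicate points are: (i) establishing the right-handed absorption relation and pinning down the exponents so that no spurious length is created; (ii) the structural fact that sandwiching by $e$ strictly lowers the idempotent precisely when $u\notin W(e)$, which is what forces the corner case to drop a well-founded parameter rather than loop (a single-sided reduction to $T_{eu}$ does \emph{not} have this property and can stall); and (iii) setting up the correct lexicographic measure in (rank of the associated idempotent, length) so that the only branch reaching the group-theoretic input is the genuine $W_e$-sandwiched one, with every other branch covered by the inductive hypothesis. Controlling the strictly shorter terms produced both by expanding $T_{w_2}T_{w_1}$ and by the Geck--Pfeiffer reduction inside $\mathcal{H}(W_e)$, and checking each indeed decreases this measure, is the bulk of the verification.
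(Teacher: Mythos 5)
Your proposal is correct and follows essentially the same route as the paper: the canonical factorization $r=w_1ew_2$, cyclic equivalence plus the absorption identity to collapse everything onto terms $T_{ewe}$ (the paper's Lemma \ref{stdeltlemma}), Godelle's dichotomy that either $ewe\in W_e$ or the associated idempotent strictly drops, induction over $\Lambda$, and the Geck--Pfeiffer reduction \cite[\S 8.2]{geck2000characters} in the group case. The one piece of bookkeeping you can discard is the inner induction on length: when $f=e$ one has $eue=eu^{*}e\in W_e$ outright (since $eW_*(e)=e$), so your middle case merges into the group case, and the paper's induction on the idempotent alone --- applied to every $T_{xfz}$ with $f<e$ regardless of length --- already terminates.
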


We will prove this proposition with the help of a lemma.

\begin{lemma} \label{stdeltlemma}
Let $r = w_1ew_2\in R$, where $w_1,w_2\in W$, $e\in\Lambda$. Then $T_r$ is equivalent to a linear combination of elements of the form $T_{ewe}$, $w\in W$.
\end{lemma}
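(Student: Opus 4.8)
The plan is to reduce $T_r$ to a product of group Hecke elements sandwiching $T_e$, and then exploit the trace property of $\equiv$ together with the idempotency $T_e^2 = T_e$ to clamp an idempotent onto \emph{both} ends. Since $T_r$ depends only on $r$ and not on how we write it, I would first replace the given decomposition by the canonical one from Proposition \ref{uniqueidempotentproposition}, so that I may assume $r = w_1 e w_2$ with $l(r) = l(w_1) + l(w_2)$.

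The first step is to establish the factorization $T_r = T_{w_1} T_e T_{w_2}$ inside $\mathcal{H}(R)$. The relation for $x = e \in \Lambda$ gives $T_e T_{w_2} = q^{l(w_2)-l(ew_2)} T_{ew_2}$, and the canonical form forces $l(ew_2) = l(w_2)$ (as $ew_2 = 1 \cdot e \cdot w_2$ is itself canonical), so $T_e T_{w_2} = T_{ew_2}$. Peeling a reduced word for $w_1$ off the left, using the length-increasing case $T_x T_r = T_{xr}$ for $x \in S$, then yields $T_{w_1} T_{ew_2} = T_{w_1 e w_2} = T_r$.

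The second step is the cyclic rotation. Because characters are trace functions, $h \equiv h'$ is precisely cyclic invariance modulo $[\mathcal{H}(R),\mathcal{H}(R)]$, so
$$T_r = T_{w_1} T_e T_{w_2} \equiv T_e T_{w_2} T_{w_1}.$$
I would then expand $T_{w_2} T_{w_1} = \sum_{u \in W} a_u T_u$ in the group Hecke subalgebra $\mathcal{H}(W) \subseteq \mathcal{H}(R)$, where the $a_u$ are its structure constants, obtaining $T_r \equiv \sum_u a_u T_e T_u$. For each term I apply idempotency and cyclicity once more, $T_e T_u = T_e T_e T_u \equiv T_e T_u T_e$, and then the $\Lambda$-relation on the left, $T_e T_u = q^{l(u)-l(eu)} T_{eu}$, together with its right-handed analogue $T_{eu} T_e = q^{l(eu)-l(eue)} T_{eue}$, collapses each sandwiched term to a scalar multiple of $T_{eue}$:
$$T_e T_u T_e = q^{l(u) - l(eue)} T_{eue}.$$
Summing gives $T_r \equiv \sum_{u \in W} a_u\, q^{l(u)-l(eue)}\, T_{eue}$, a linear combination of elements $T_{ewe}$ as desired.

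The step I expect to require the most care is supplying the building blocks on the algebra side. The factorization $T_r = T_{w_1} T_e T_{w_2}$ relies on length-additivity propagating through every prefix of a reduced word for $w_1$, and the final collapse uses the right-multiplication version $T_{eu} T_e = q^{l(eu)-l(eue)} T_{eue}$ of the $\Lambda$-relation, which is not literally among the listed relations; I would justify it by appealing to the left-right symmetry of Godelle's defining presentation (equivalently, the canonical anti-involution $T_r \mapsto T_{r^*}$ of $\mathcal{H}(R)$). The conceptual heart, by contrast, is short: cyclic invariance plus $T_e^2 = T_e$ lets one force an idempotent onto both ends of any word whose associated idempotent is $e$.
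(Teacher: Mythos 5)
Your proof is correct and follows essentially the same route as the paper's: factor $T_r \equiv T_e T_{w_2} T_{w_1}$ by cyclicity, expand $T_{w_2}T_{w_1}$ in the group Hecke algebra, then use $T_e = T_e T_e$ plus cyclicity to sandwich each term into $q^{l(u)-l(eue)}T_{eue}$. Your two extra touches---passing to the length-additive canonical decomposition of Proposition \ref{uniqueidempotentproposition} before factoring, and explicitly justifying the right-handed $\Lambda$-relation via the anti-involution---merely make rigorous two steps the paper leaves implicit.
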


\begin{proof}
We have \[T_r = T_{w_1ew_2} = T_{w_1} T_e T_{w_2} \equiv T_e T_{w_2}T_{w_1} = \sum_{w\in W} a_w T_{ew},\] where the $a_w\in K$ arise from the Hecke algebra relations from multiplying $T_{w_2}$ and $T_{w_1}$.

Now, \[T_{ew} = T_{eew} = q^{l(ew)-l(w)}T_eT_eT_w \equiv q^{l(ew)-l(w)}T_eT_wT_e = q^{l(ew)-l(ewe)} T_{ewe},\] so \[T_r \equiv \sum_{w\in W} a_w q^{l(ew)-l(ewe)}T_{ewe}.\]
\end{proof}

\begin{proof}[Proof of Proposition \ref{stdeltlinearcombination}]
If $r = ewe$ for $w\in W(e)$, then $r\in W_e$, so $T_r\in\mathcal{H}(W_e)$, and Geck and Pfeiffer \cite[\S 8.2]{geck2000characters} give a procedure to write $T_r$ as equivalent to a linear combination of standard elements.

By Proposition \ref{uniqueidempotentproposition}, we can write $r = w_1ew_2$ for a unique idempotent $e\in\Lambda$, and we can choose $w_1,w_2\in W$ so that $l(r) = l(w_1) + l(w_2)$. We will reduce this to the case covered by Geck and Pfeiffer by induction on $\Lambda$ (note that $\Lambda$ is finite since $R$ is finite).

First, if $e$ is minimal, Lemma \ref{stdeltlemma} tells us that $T_r$ is equivalent to a linear combination of elements of the form $T_{ewe}$. By \cite[Corollary~1.13]{godelle}, since $e$ is minimal, we must have $w\in W(e)$.

Next, assume that $e$ is not minimal, and that for all $f\in\Lambda,\;f<e$, every element of the form $T_{xfz},\; x,z\in W$ is equivalent to a linear combination of standard elements. Again, Lemma \ref{stdeltlemma} tells us that $T_r$ is equivalent to a linear combination of elements of the form $T_{ewe}$.

By the definition of Red$(e,e)$, we can write $w = xyz$, where $x,z\in W(e),\; y\in \text{Red}(e,e)$, so $ewe = xeyez$. Godelle \cite[Corollary~1.13]{godelle} tells us that either $y\in W(e)$ or $eye=f$ for some $f\in \Lambda,\; f<e$. In the first case, $xyz\in W(e)$, so $ewe\in eW(e)e =  W_e$. In the second case, we have $ewe = xeyez = xfz$, so by hypothesis, $T_{ewe} = T_{xfz}$ is equivalent to a sum of standard elements.
\end{proof}

\begin{remark}
See Example \ref{nonstandardelementexample} for the computation of character values for a nonstandard element of $\mathcal{H}(RSp_4)$.
\end{remark}

\section{Decomposing the Character Table of $RSp_{2n}$: The A-Matrix} \label{amatrix}

Most of the rest of the paper concerns the symplectic Renner monoid $RSp_{2n}$. In this section, we compute the A-matrix of $RSp_{2n}$. This is an analogous result to Solomon's \cite[Proposition~3.5]{solomon2002representations}, which computed the A-matrix for the rook monoid $R_n$. Let $M := M(RSp_{2n})$ denote the character table of $RSp_{2n}$, and recall that the rows of $M$ (and therefore the rows of our related matrices, $A$ and $B$) are labelled by Munn classes of elements of $RSp_{2n}$ and the columns are labelled by the irreducible representations, both of which are indexed by $\mathcal{Q}_n = \mathcal{Q}(RSp_{2n})$.

\subsection{Structure of the A-Matrix} \label{amatrixstructure}

Since $\mathcal{Q}_n = \mathcal{SP}_n\sqcup \bigcup_{t=0}^n \mathcal{P}_t$, we can write by \cite[Theorem~4.1]{liRennerMonoid} \[M = \begin{pmatrix} M(WSP_{2n}) & * \\ 0 & M(R_n) \end{pmatrix},\] so since $Y$ is block diagonal, we can write \[A = \begin{pmatrix} Id & U \\ 0 & T \end{pmatrix},\] where $Id$ is an identity matrix and $T$ is the A-matrix of $R_n$ computed in \cite[Proposition~3.5]{solomon2002representations}.

Therefore, to compute $A$, we need only compute the entries of $U$. Thus, we only need consider the values of irreducible representations indexed by elements of $\bigcup_{t=0}^n \mathcal{P}_t$.

\begin{lemma} 
Let $\alpha\in\mathcal{Q}_n,\; \lambda\vdash t,\; 0\le t\le n$, and let $r = r_\alpha$. Then the matrix entry
\[A_{\alpha, \lambda} = |S_{\lambda, r}|.\]
\end{lemma}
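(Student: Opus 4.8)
The plan is to read the entry $A_{\alpha,\lambda}$ off of the defining relation $M = AY$ by computing $\chi_\beta(r_\alpha)$ in two ways and comparing. I would prove the slightly stronger statement $A_{\alpha,\gamma} = |S_{\gamma, r_\alpha}|$ for \emph{all} $\gamma \vdash t$ simultaneously, then specialize to $\gamma = \lambda$. First I fix a column index $\beta \vdash t$, which lies in the block $\mathcal{P}(W_{e_t}) = \mathcal{P}(S_t)$, so that $\chi_\beta$ is an irreducible character of $RSp_{2n}$ whose restriction $\ba{\chi}_\beta$ is a character of $S_t$. Reading off the $(\alpha,\beta)$ entry of $M = AY$ and using that $Y$ is block diagonal, only terms whose row index $\gamma$ lies in the same block survive, giving
\[\chi_\beta(r_\alpha) = \sum_{\gamma \vdash t} A_{\alpha,\gamma}\,\ba{\chi}_\beta(w_\gamma),\]
since the block of $Y$ indexed by $S_t$ is exactly its group character table $\big(\ba{\chi}_\beta(w_\gamma)\big)$.

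Next I would compute $\chi_\beta(r_\alpha)$ a second way via the symplectic character formula. Applying Proposition \ref{symplecticcharactertable} gives
\[\chi_\beta(r_\alpha) = \sum_{K \in C(t, r_\alpha)} \ba{\chi}_\beta\big((r_\alpha)_K\big).\]
Because $\ba{\chi}_\beta$ is a class function on $S_t$, its value on $(r_\alpha)_K$ depends only on the cycle type of $(r_\alpha)_K$. By the definition of $S_{\lambda, r_\alpha}$ there is a disjoint decomposition $C(t, r_\alpha) = \bigsqcup_{\lambda \vdash t} S_{\lambda, r_\alpha}$, and $\ba{\chi}_\beta((r_\alpha)_K) = \ba{\chi}_\beta(w_\lambda)$ for every $K \in S_{\lambda, r_\alpha}$. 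Collecting terms yields
\[\chi_\beta(r_\alpha) = \sum_{\lambda \vdash t} |S_{\lambda, r_\alpha}|\,\ba{\chi}_\beta(w_\lambda).\]

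Finally I would equate the two expansions. Subtracting them, the vector $v$ with entries $v_\gamma = A_{\alpha,\gamma} - |S_{\gamma, r_\alpha}|$, indexed by $\gamma \vdash t$, satisfies $\sum_{\gamma \vdash t} v_\gamma\,\ba{\chi}_\beta(w_\gamma) = 0$ for every $\beta \vdash t$. Since the character table $\big(\ba{\chi}_\beta(w_\gamma)\big)_{\beta,\gamma}$ of $S_t$ is invertible (orthogonality of irreducible characters, a special case of the invertibility statement already in use), this forces $v = 0$. Hence $A_{\alpha,\gamma} = |S_{\gamma, r_\alpha}|$ for all $\gamma \vdash t$, and setting $\gamma = \lambda$ gives $A_{\alpha,\lambda} = |S_{\lambda, r}|$.

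I expect the main obstacle to be bookkeeping rather than genuine mathematical difficulty. The two points requiring care are (i) ensuring the column index $\beta$ lands in a single block, so that Proposition \ref{symplecticcharactertable} applies with one fixed value of $t$, and (ii) keeping the row/column conventions for $M$, $A$, and $Y$ consistent throughout (rows indexed by Munn classes, columns by irreducibles, with $Y$ carrying the $S_t$ character table in the orientation $Y_{\gamma,\beta} = \ba{\chi}_\beta(w_\gamma)$). A minor point worth checking is that every $K \in C(t, r_\alpha)$ yields a well-defined cycle type for $(r_\alpha)_K \in S_t$; this holds because $K$ is admissible of size $t$ with $r_\alpha(K) = K$ and $K \subseteq I^\circ(r_\alpha)$, so that $(r_\alpha)_K = \mu^-_K\, r_\alpha\, \mu_K$ is a genuine permutation of $\mathbf{t}$.
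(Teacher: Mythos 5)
Your proposal is correct and takes essentially the same route as the paper's own proof: both compute $\chi_\beta(r_\alpha)$ once from $M = AY$ using block diagonality of $Y$, and once from Proposition \ref{symplecticcharactertable} together with the decomposition $C(t,r_\alpha) = \bigsqcup_{\gamma\vdash t} S_{\gamma,r_\alpha}$ and constancy of $\ba{\chi}_\beta$ on cycle types, then conclude by linear independence of the irreducible characters of $S_t$. The only difference is expository: you make explicit the quantification over all column indices $\beta\vdash t$ and the invertibility of the $S_t$ character table, which the paper leaves implicit in its appeal to linear independence.
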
 \label{admissiblecycletypelemma}

\begin{proof}
By Proposition \ref{symplecticcharactertable},
\[M_{\alpha, \lambda} = \chi_{\lambda}(r) = \sum_{K \in C(t, r)} \ba{\chi}_{\lambda}(r_K).\]
We have seen (Section \ref{rookmonoiddefinitionsection}) that \[C(t,r) = \bigsqcup_{\mu\vdash t} S_{\mu,r},\] and all permutations $r_K$, $K\in S_{\mu,r}$ have cycle type $\mu$. Therefore, \[M_{\alpha, \lambda} = \chi_{\lambda}(r) = \sum_{K \in C(t,r)} \chi_{\lambda}(r_K) = \sum_{\mu \vdash t} \sum_{K \in S_{\mu,r}} \ba{\chi}_{\lambda}(r_K) = \sum_{\mu \vdash t} |S_{\mu,r}| \; \ba{\chi}_{\lambda}(w_\mu)\]
Note that 
\[(A Y)_{\alpha, \lambda} = \sum_{\beta \in \mathcal{Q}_n} A_{\alpha, \beta} Y_{\beta, \lambda} = \sum_{\mu \vdash t}  A_{\alpha, \mu} Y_{\mu, \lambda} = \sum_{\mu\vdash t}  A_{\alpha, \mu} \ba{\chi}_{\lambda}(w_\mu),\]
where the second equality is because $Y$ is block diagonal.
Thus, by linear independence of characters of $S_t$, we see that $A_{\alpha, \mu} = |S_{\mu,r}|$.
 \end{proof}

\begin{ex} Lemma \ref{admissiblecycletypelemma} allows us to compute Solomon's A-matrix $T$.

In the case that $\alpha\in\mathcal{P}_t, \; 0 \le t \le n$, the representative $r_\alpha$ has a proper admissible domain $I(r_\alpha)$, so every subset $K\subset I(r_\alpha)$ is admissible. Hence, an element of $S_{\lambda,r_\alpha}$ is simply a choice of cycles of $r_\alpha$ with cycle type $\lambda$.

The number of ways to choose $\lambda_{(i)}$ cycles of length $i$ from $\alpha$ is $\binom{\alpha_{(i)}}{\lambda_{(i)}}$, so multiplying over all $i$, we get that \[T_{\alpha,\lambda} = \binom{\alpha}{\lambda},\] which matches Proposition \ref{typeamatrices}.
\end{ex}

\subsection{Determining the $U$ Block for $RSp_{2n}$}

All that remains to compute $A$ is a combinatorial description of $|S_{\lambda,r_\alpha}|$, where $\lambda\in \mathcal{P}_t$, \; $\alpha\in \mathcal{SP}_n$. In this case, $r_\alpha$ is a group conjugacy class representative $w_\alpha$. Our computation will use particularly nice conjugacy class representatives from \cite[Section~3.4]{geck2000characters}.

Let $s_i,\; 1\le i\le n-1$ be the permutation $(i,\; i+1)(\ba{i},\; \ba{i+1})$, and let $t_i = (i+1,\; \ba{i+1})$. Then $W_n$ is a Coxeter group with generators $\{t_0,s_1,\ldots,s_{n-1}\}$, and $t_i = s_is_{i-1}\ldots s_1t_0s_1\ldots s_{i-1}s_i$.

Let $\alpha\in\mathcal{SP}_n$. Then $\alpha$ is a double partition $(\gamma,\delta)$ with $|\gamma|+|\delta|=n$. Write $\delta = (a_1,\ldots,a_p)$ with $a_1\le a_2\ldots\le a_p$ and $\gamma = (a_{p+1},\ldots,a_q)$ with $a_{p+1}\ge a_{p+2}\ldots\ge a_q$.

Then, set \[w_\alpha := b^-_{m_1,a_1}\ldots b^-_{m_p,a_p} b^+_{m_{p+1},a_{p+1}}\ldots b^+_{m_q,a_q},\] where $m_j = \sum_{i=1}^{j-1} a_i$ and \[b^-_{m,a} = t_ms_{m+1}\ldots s_{m+a-1}, \hspace{30pt} b^+_{m,a} = s_{m+1}\ldots s_{m+a-1}.\] We call $b^+_{m,a}$ (resp. $b^-_{m,a}$) a positive (resp. negative) block.

By \cite[Proposition~3.4.7]{geck2000characters}, $\{w_\alpha\}$ is a complete set of (minimal length) representatives of the conjugacy classes of $W_n$.

\begin{lemma} \label{blockdescription}
We have the following description of the blocks $b^+_{m,a}$ and $b^-_{m,a}$: \[ b_{m,a}^+ = (m+1, \; m+2, \; \dots \; m+a) (\ba{m+1}, \; \ba{m+2}, \; \cdots \; \ba{m+a}),\] \[ b^-_{m,a} =(m+1, \; m+2, \cdots m+a-1, m+a, \; \overline{m+1}, \; \overline{m+2} \cdots \overline{m+a}),\]
\end{lemma}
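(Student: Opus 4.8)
The plan is to compute each block by direct evaluation of its defining product of generators, handling the unbarred indices $\{m+1,\ldots,m+a\}$ and the barred indices $\{\ba{m+1},\ldots,\ba{m+a}\}$ separately and then tracking how $t_m$ couples the two. Throughout I compose permutations as functions acting on the left, so that in a product the rightmost factor is applied first, consistent with the convention of Section~\ref{rookmonoiddefinitionsection}; this is the convention under which the stated cycles (rather than their inverses) appear.

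First I would record the elementary fact that a product of consecutive adjacent transpositions telescopes into a single cycle:
\[(i,\,i+1)(i+1,\,i+2)\cdots(j-1,\,j) = (i,\,i+1,\,\ldots,\,j),\]
which follows by a one-line induction on $j-i$ (or by tracing directly: each $k$ with $i\le k<j$ is sent to $k+1$, while $j$ is sent back to $i$). Applying this to $b^+_{m,a} = s_{m+1}\cdots s_{m+a-1}$ is then immediate once I observe that each $s_i = (i,i+1)(\ba i,\ba{i+1})$ splits as a transposition on unbarred indices together with a disjoint transposition on barred indices, and that these two families never interact. The telescoping identity applied to each family yields the two disjoint cycles $(m+1,\ldots,m+a)$ and $(\ba{m+1},\ldots,\ba{m+a})$; disjointness holds because $m+a = m_j + a_j \le \sum_i a_i = n$, so all unbarred entries lie in $\{1,\ldots,n\}$ and all barred entries in $\{n+1,\ldots,2n\}$.

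For $b^-_{m,a} = t_m\, b^+_{m,a}$ with $t_m = (m+1,\ba{m+1})$, I would trace the composite cycle starting from the boundary elements. Since $t_m$ is applied after $b^+_{m,a}$, it alters only the images equal to $m+1$ or $\ba{m+1}$: the element $m+a$, which $b^+_{m,a}$ returns to $m+1$, is then carried by $t_m$ to $\ba{m+1}$, and dually $\ba{m+a}$ is carried to $m+1$, while every other element follows its $b^+_{m,a}$-image unchanged. This splices the two $a$-cycles into the single $2a$-cycle $(m+1,\ldots,m+a,\ba{m+1},\ldots,\ba{m+a})$, as claimed.

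The only real care needed is bookkeeping at the four boundary indices $m+1$, $m+a$, $\ba{m+1}$, $\ba{m+a}$, where the action of $t_m$ meets the wrap-around of the two cycles; away from these the computation is transparent. I would also check the edge case $a=1$, where $b^+_{m,1}$ is the identity and $b^-_{m,1} = t_m = (m+1,\ba{m+1})$ is a single $2$-cycle, matching both formulas. I do not anticipate a genuine obstacle: the subtlety is entirely in fixing the composition convention so that the cycles come out in the stated orientation rather than reversed.
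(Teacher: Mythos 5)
Your proposal is correct: the convention check (rightmost factor first, matching the paper's left-acting permutations), the telescoping of adjacent transpositions into the two disjoint cycles for $b^+_{m,a}$, and the splicing effect of $t_m = (m+1,\ba{m+1})$ at the boundary elements $m+a$ and $\ba{m+a}$ all check out, including the $a=1$ edge case. This is exactly the ``straightforward computation from the definitions'' that the paper's one-line proof refers to, just written out in full.
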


\begin{proof}
These are straightforward computations from the definitions.
\end{proof}

\begin{ex}
\[b^+_{0,3} = s_1 s_2 = (1,\; 2)(\ba{1},\; \ba{2})(\ba{2},\; \ba{3}) = (1,\; 2,\; 3)(\ba{1},\; \ba{2},\; \ba{3}),\] while \[b^-_{0,3} = t_0 s_1 s_2 = (1,\; \ba{1}) (1,\; 2,\; 3)(\ba{1},\; \ba{2},\; \ba{3})  = (1,\; 2,\; 3,\; \ba{1},\; \ba{2},\; \ba{3}).\]
\end{ex}

Let $w:=w_\alpha$. If $K\in C(t,w)$, then $K$ must be admissible, so no cycle of $w$ supported in $K$ can contain both $i$ and $\ba{i}$ for any $i$. In other words, $K$ can only depend on the positive blocks of $w_\alpha$. Using this observation, we can now give a combinatorial description of the $|S_{\lambda,w}|$.

\begin{proposition}
\[|S_{\lambda, w}| = 2^{\sum_{i} \lambda_{(i)} } \cdot \binom{\gamma}{\lambda}.\]
\end{proposition}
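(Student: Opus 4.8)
The plan is to count the set $S_{\lambda,w}$ directly, using the explicit cycle structure of $w := w_\alpha$ provided by Lemma \ref{blockdescription}. Since $w$ is a bijection on $\mathbf{m}$, we have $I^\circ(w) = \mathbf{m}$, so $S_{\lambda,w}$ consists exactly of those admissible $K \subseteq \mathbf{m}$ with $|K| = t = |\lambda|$, with $w(K) = K$, and with $w_K$ of cycle type $\lambda$. Because $t \le n < 2n = |\mathbf{m}|$ (the case $n=0$ being trivial), no such $K$ equals $\mathbf{m}$, so admissibility means precisely that $K$ contains no bar-pair $\{i,\ba{i}\}$.

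First I would record the relevant cycle structure. By Lemma \ref{blockdescription} each negative block $b^-_{m,a}$ is a single $2a$-cycle whose support contains the bar-pairs $\{j,\ba{j}\}$, so that cycle can never lie inside an admissible $K$; this is the observation, already noted before the statement, that $K$ depends only on the positive blocks. Each positive block $b^+_{m,a}$ splits into two disjoint $a$-cycles, $C^+ = (m+1,\dots,m+a)$ on unbarred letters and $C^- = (\ba{m+1},\dots,\ba{m+a})$ on their bars, with $C^-$ equal to the bar-image of $C^+$. Since $w(K)=K$ forces $K$ to be a union of cycles of $w$, the set $K$ is a union of cycles drawn solely from the positive blocks.

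Next I would reduce admissibility to a purely local, within-block condition. The supports of the positive blocks partition $\mathbf{n}$ into the consecutive intervals $\{m_j+1,\dots,m_j+a_j\}$, and via the bar involution they partition $\mathbf{m}\setminus\mathbf{n}$ in the same way; hence cycles from distinct positive blocks have disjoint supports whose bars again lie in distinct blocks. Consequently the only way $K$ can acquire a bar-pair is by containing both $C^+$ and $C^-$ of one positive block. So for each positive block we may place into $K$ either $C^+$, or $C^-$, or neither, and these choices are independent across blocks. The induced permutation $w_K$ then has one $a$-cycle for each chosen cycle, so its cycle type equals $\lambda$ exactly when, for every $i$, we select $\lambda_{(i)}$ cycles of length $i$.

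Finally I would assemble the count. The length-$i$ positive blocks correspond to the parts of $\gamma$ equal to $i$, of which there are $\gamma_{(i)}$; choosing which $\lambda_{(i)}$ of them to use contributes $\binom{\gamma_{(i)}}{\lambda_{(i)}}$, and choosing $C^+$ versus $C^-$ for each of the $\lambda_{(i)}$ selected blocks contributes $2^{\lambda_{(i)}}$. Multiplying over all $i$ gives
\[|S_{\lambda,w}| = \prod_{i\ge 1} 2^{\lambda_{(i)}}\binom{\gamma_{(i)}}{\lambda_{(i)}} = 2^{\sum_i \lambda_{(i)}}\binom{\gamma}{\lambda},\]
as claimed (degenerate cases, where $\gamma$ lacks enough parts of some length, are automatically handled by the binomial coefficient vanishing). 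The one step needing genuine care rather than bookkeeping is the reduction of admissibility to a within-block condition — that is, checking that a bar-pair can never be split across two distinct positive blocks — which rests on the fact that each positive block's support is closed under the bar involution.
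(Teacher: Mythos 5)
Your proof is correct and takes essentially the same approach as the paper's: both count elements of $S_{\lambda,w}$ by choosing, for each $i$, which $\lambda_{(i)}$ of the $\gamma_{(i)}$ positive blocks of length $i$ to use and then one of the two cycles in each chosen block, with admissibility reducing to taking at most one cycle per positive block, yielding $\prod_{i\ge 1}\binom{\gamma_{(i)}}{\lambda_{(i)}}2^{\lambda_{(i)}}$. Your extra verifications (that a bar-pair cannot be split across distinct positive blocks, and that $K\neq\mathbf{m}$ so admissibility means bar-pair-free) simply make explicit details the paper leaves implicit.
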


\begin{proof}
By Lemma \ref{blockdescription}, only the cycles arising from positive blocks in $w_\alpha$ are admissible. Moreover $\gamma_{(i)}$ is the number of positive blocks of length $i$, and each positive block is comprised of two disjoint cycles.

To construct an element of $S_{\lambda,w}$, we must, for every $i$, specify $\lambda_{(i)}$ distinct cycles of length $i$ in $w$ in such a way that the union $K$ of their support is admissible; once we do this, $w_K$ will have cycle type $\lambda$. The admissibility condition in this case reduces to the condition that we only choose at most one of the two cycles in any given positive block. Multiplying over all $i$, \[|S_{\lambda, w}| = \prod_{i\ge 1} \binom{\gamma_{(i)}}{\lambda_{(i)}} \cdot 2^{\lambda_{(i)}} = 2^{\sum_i \lambda_{(i)}} \cdot\binom{\gamma}{\lambda}.\]
\end{proof}

We have now completed the computation of the A-matrix for $RSp_{2n}$.

\begin{theorem} \label{amatrixcomputation}
The A-matrix of $RSp_{2n}$ is
\[A = \begin{pmatrix}Id & U \\ 0 & T \end{pmatrix},\] where \[T_{\alpha,\lambda} = \binom{\alpha}{\lambda}, \hspace{30pt} \alpha\in \mathcal{P}_{t_1},\; \lambda\in \mathcal{P}_{t_2},\; 0\le t_1,t_2\le n,\] and \[U_{\alpha, \lambda} = 2^{\sum_{i} \lambda_{(i)} } \cdot \binom{\gamma}{\lambda}, \hspace{30pt} \alpha = (\gamma, \delta) \in \mathcal{SP}_n,\; \lambda\in \mathcal{P}_t, \; 0\le t\le n.\]
\end{theorem}

\section{Restricting Monoid Representations to Group Representations and Decomposing Character Tables} \label{restrictionsection}
In this section, we determine the B-matrix of $RSp_{2n}$ by restricting the irreducible representations to the group of units $WSp_{2n}$, using also Solomon's computation of the B-matrix of the rook monoid \cite[Proposition 3.11]{solomon2002representations}.

Similarly to the A-matrix, since \[M = \begin{pmatrix} M(WSP_{2n}) & * \\ 0 & M(R_n) \end{pmatrix},\] we must have \[B = \begin{pmatrix} Id & V \\ 0 & L \end{pmatrix},\] where $Id$ is an identity matrix, $L$ is the B-matrix of the rook monoid $R_n$, and $V$ is to be computed. Let $W_n:=WSp_{2n}$ for ease of notation.

\subsection{Restricting Representations From $RSp_{2n}$ to $W_n$}

\begin{definition}
Given groups $G$ and $H$, and corresponding representations $V_G$ and $V_H$, we define the box tensor representation $V_G \boxtimes V_H$ to be the representation of $G \times H$ with the action $(g,h) \cdot (v_1 \boxtimes v_2) = gv_1 \boxtimes hv_2$.
\end{definition}

We can now explain the restriction of an $RSp_{2n}$ representation to $W_n$, in analogue to Solomon's \cite[Corollary~3.15]{solomon2002representations} for the rook monoid. Solomon shows that given an irreducible character $\chi$ of $R_n$ corresponding to a partition of $t$, the restriction $\chi\mid_{S_n} = \Ind_{S_t \times S_{n-t}}^{S_n}(\ba{\chi} \boxtimes \eta_{n-t})$, where $\eta_{n-t}$ is the trivial representation on $S_{n-t}$. We show that similarly,

\begin{proposition} \label{pierirulegeneral}
If $\alpha\in \mathcal{P}_t, \; 0\le t\le n$, let $\chi:=\chi_{\alpha}$. Then 
\[
\chi|_{W_n} = \Ind_{S_t \times W_{n-t}}^{W_n}(\ba{\chi} \boxtimes \eta_{n-t}).
\]
\end{proposition}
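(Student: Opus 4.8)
The plan is to compute the restriction $\chi_\alpha|_{W_n}$ by evaluating both sides on an arbitrary conjugacy class representative $w_\beta$ of $W_n$ and checking that the character values agree. Since $\chi_\alpha$ comes from a representation of $RSp_{2n}$ with $\ba\chi$ a character of $S_t$, Proposition \ref{symplecticcharactertable} gives me a concrete formula for the left-hand side: for any $\sigma \in W_n$,
\[
\chi_\alpha(\sigma) = \sum_{K \in C(t,\sigma)} \ba\chi(\sigma_K).
\]
So the first step is to fix $\beta = (\gamma,\delta) \in \mathcal{SP}_n$, take $\sigma = w_\beta$ the nice block representative from Section \ref{amatrix}, and organize this sum. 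Grouping the admissible sets $K$ by the cycle type $\mu \vdash t$ of $\sigma_K$, and invoking the count $|S_{\mu,w_\beta}| = 2^{\sum_i \mu_{(i)}}\binom{\gamma}{\mu}$ established just above, I get
\[
\chi_\alpha(w_\beta) = \sum_{\mu \vdash t} 2^{\sum_i \mu_{(i)}} \binom{\gamma}{\mu}\, \ba\chi(w_\mu).
\]

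The second step is to compute the right-hand side, the induced character value $\bigl(\Ind_{S_t \times W_{n-t}}^{W_n}(\ba\chi \boxtimes \eta_{n-t})\bigr)(w_\beta)$, and show it equals the same expression. The natural tool is the formula for an induced character as a sum over those cosets fixing $w_\beta$, or equivalently a Mackey/Frobenius-type decomposition: the value is a weighted sum over ways of splitting the cycles of $w_\beta$ into a piece of total size $t$ contributing through $\ba\chi$ and a complementary piece of size $n-t$ contributing through the trivial character $\eta_{n-t}$. The combinatorics of $W_n = WSp_{2n}$ is governed by signed cycle types, i.e. double partitions: the positive blocks of $w_\beta$ give the cycles of $S_t$-type available (each positive block of length $i$ splitting into two conjugate cycles, which is the source of the factor $2^{\sum_i \mu_{(i)}}$), while any leftover positive blocks together with all negative blocks land in the $W_{n-t}$ factor, where the trivial character counts them with multiplicity one. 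Matching the subset-choice on positive blocks to the binomial $\binom{\gamma}{\mu}$ and accounting for the doubling is exactly what reproduces the displayed sum.

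An alternative, cleaner route — and the one I would actually prefer to write — is to avoid recomputing the induced character from scratch and instead appeal directly to the A-matrix already computed in Theorem \ref{amatrixcomputation}. Since $U_{\beta,\mu} = 2^{\sum_i \mu_{(i)}}\binom{\gamma}{\mu}$ is precisely the coefficient appearing in my formula for $\chi_\alpha(w_\beta)$, the statement $\chi_\alpha|_{W_n} = \Ind_{S_t \times W_{n-t}}^{W_n}(\ba\chi \boxtimes \eta_{n-t})$ reduces to the purely group-theoretic claim that the induced character has these same A-matrix entries as its expansion coefficients against the irreducible characters $\ba\chi_\mu$ of $S_t$. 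Concretely, I would use the transitivity of induction and the fact that inducing the trivial character $\eta_{n-t}$ from $W_{n-t}$ produces a permutation character whose value on $w_\beta$ counts admissible complements, so that $\Ind(\ba\chi \boxtimes \eta)$ evaluated on $w_\beta$ becomes $\sum_\mu (\text{number of admissible size-}t\text{ subsets of type }\mu)\,\ba\chi(w_\mu)$ — and this count is literally $|S_{\mu,w_\beta}|$ by definition.

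\textbf{The main obstacle} I anticipate is the careful bookkeeping on the group side: correctly identifying, for the induced character, which cycles of $w_\beta$ are eligible to contribute to the $S_t$-factor (only the positive blocks, since admissibility forbids a cycle containing both $i$ and $\ba i$) and rigorously deriving the factor $2^{\sum_i \mu_{(i)}}$ from the splitting of each positive block into two $S_t$-cycles under the inclusion $S_t \times W_{n-t} \hookrightarrow W_n$. The signed-cycle-type combinatorics of $W_n$ must be handled with care so that negative blocks are never counted toward $t$; once that dictionary is set up correctly, both sides collapse to $\sum_{\mu \vdash t} |S_{\mu,w_\beta}|\,\ba\chi(w_\mu)$ and the equality follows by evaluating on a complete set of class representatives. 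I would therefore present the admissibility-based cycle accounting as a separate clearly-stated step rather than folding it into the main computation.
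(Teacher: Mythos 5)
Your proposal is correct and is, at its core, the paper's own argument: the paper likewise combines the character formula of Proposition \ref{symplecticcharactertable} with the identification of cosets of $S_t\times W_{n-t}$ in $W_n$ with admissible $\sigma$-stable $t$-subsets (realized there by explicit coset representatives $\tau_K$ extending $\mu_K$), so that the induced character formula reproduces $\sum_{K\in C(t,\sigma)}\ba{\chi}(\sigma_K)$ term by term. The only cosmetic difference is that the paper proves the identity pointwise for arbitrary $\sigma\in W_n$, which renders your detour through the class representatives $w_\beta$ and the A-matrix counts $|S_{\mu,w_\beta}|=2^{\sum_i\mu_{(i)}}\binom{\gamma}{\mu}$ unnecessary (though harmless), exactly as your preferred second route already anticipates.
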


\begin{proof}
Let $\sigma\in W_n$. By Proposition \ref{symplecticcharactertable}, \[\chi(\sigma) = \sum_{K \in C(\sigma,t)}\ba{\chi}(\sigma_K),\] where $\sigma_K = \mu^-_K\sigma\mu_K$. Now $\mu_K$ sends $\mathbf{t}$ to $K$ and $\mu_K^-$ does the reverse, so there exists an element $\tau:=\tau_K\in W_n$ such that $\tau|_{\mathbf{t}} = \mu_K$ and $\tau^{-1}|_K = \mu_K^-$. Then set $\sigma_K' := \tau^{-1}\sigma\tau$; this is an element of $W_n$ such that $\sigma_K'|_\mathbf{t} = \sigma_K$. Thus, $\ba{\chi}(\sigma_K) = (\ba{\chi}\otimes \eta_{n-t})(\sigma_K')$. Since $\mathbf{t}\cap K$ may be any subset of $\mathbf{t}$, the elements $\tau_K$, as $K$ runs over $C(\sigma,t)$, are a set of coset representatives of $W_n/(S_t\times W_{n-t})$. Hence, \[\chi(\sigma) = \sum_{K \in C(\sigma,r)} (\ba{\chi}\otimes \eta_{n-t})(\sigma_K') = \sum_{\tau \in W_n/(S_t\times W_{n-t})} (\ba{\chi}\otimes \eta_{n-t})(\tau\sigma\tau^{-1})  = \Ind_{S_t \times W_{n-t}}^{W_n}(\ba{\chi} \boxtimes \eta_{n-t})(\sigma).\]
\end{proof}

\begin{remark}
We expect this proof to extend to other Renner monoids, in particular type $D_n$, using \cite[Theorem~4.1]{liRennerMonoid} in place of \cite[Theorem~4.2]{li2008representations}. Likewise, the same proof yields Solomon's result for type $A_n$.
\end{remark}

\subsection{A Pieri Rule Analogue for the Type $C_n$ Weyl Group}\label{PieriC}
We'll now determine an analogue of the Pieri rule for $C_n$. We first highlight two facts from \cite{geck2000characters}.

\begin{proposition}\cite[Lemma~6.1.3]{geck2000characters} \label{inductionfact1}
Let $n \ge 1$ and $k,l \ge 0$ be integers such that $n = k + l$. Let $(\lambda_1, \lambda_2)$ and $(\mu_1,\mu_2)$ be pairs of partitions with $|\lambda_1| + |\lambda_2| = k$ and $|\mu_1| + |\mu_2| = l$. Then, using the diagonal embedding $W_k \times W_l \subseteq W_n$, we have 
\[\Ind_{W_k \times W_l}^{W_n}(\ba{\chi}_{a_1, a_2} \boxtimes \ba{\chi}_{b_1, b_2}) = \sum_{(\nu_1, \nu_2)}c_{a_1, b_1}^{\nu_1}c_{a_2, b_2}^{\nu_2}\chi_{\nu_1, \nu_2}
\] where the $c_{a,b}^\nu$ are Littlewood-Richardson coefficients and the sum runs over all pairs of partitions $(\nu_1, \nu_2)$ for which $|\nu_i| = |\lambda_i| + |\nu_i|$ for $i = 1, 2$.
\end{proposition}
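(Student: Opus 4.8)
The plan is to recognize Proposition~\ref{inductionfact1} as the standard Littlewood--Richardson / Pieri-type induction formula for the Weyl group $W_n$ of type $C_n$ (equivalently $B_n$), whose irreducible characters are indexed by pairs of partitions $(\nu_1,\nu_2)$ with $|\nu_1|+|\nu_2|=n$. The key structural fact I would exploit is that the representation theory of $W_n$ is governed by a \emph{bigraded} Hopf-algebra structure: the tower of groups $W_0\subseteq W_1\subseteq\cdots$ under induction and restriction, together with the two ``sectors'' of a double partition, makes the Grothendieck group $\bigoplus_n R(W_n)$ isomorphic as a Hopf algebra to $\Lambda\otimes\Lambda$, the tensor square of the ring of symmetric functions. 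Under this isomorphism the irreducible $\ba{\chi}_{\nu_1,\nu_2}$ corresponds to the tensor of Schur functions $s_{\nu_1}\otimes s_{\nu_2}$, induction corresponds to multiplication, and the structure constants of multiplication in $\Lambda\otimes\Lambda$ are exactly products $c^{\nu_1}_{a_1,b_1}\,c^{\nu_2}_{a_2,b_2}$ of Littlewood--Richardson coefficients, since multiplication is taken componentwise in each tensor factor. This is the conceptual content of \cite[Lemma~6.1.3]{geck2000characters}, and I would cite it directly as the proposition is stated as a quotation from that source.

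Since the statement is attributed verbatim to Geck--Pfeiffer, the honest move is to give a short argument explaining \emph{why} the formula takes this product-of-LR-coefficients shape, rather than reprove the whole Clifford-theoretic classification. First I would recall that $W_n = (\Z/2\Z)^n \rtimes S_n$, so an irreducible $\ba{\chi}_{\nu_1,\nu_2}$ is built by Clifford theory from the sign character $\varepsilon$ on the abelian normal subgroup $(\Z/2\Z)^n$: one partitions the $n$ coordinates into a ``$+$'' set of size $|\nu_1|$ and a ``$-$'' set of size $|\nu_2|$, and tensors a Specht module $S^{\nu_1}$ on the $+$ block with a sign-twisted Specht module $S^{\nu_2}$ on the $-$ block. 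Induction from $W_k\times W_l$ to $W_n$ then independently merges the two $+$ blocks and the two $-$ blocks. Because the sign character on $(\Z/2\Z)^n$ separates the $+$ and $-$ sectors and is fixed under the relevant stabilizers, the induction factors through the $S_{|\nu_1|}$-induction on the plus side and the $S_{|\nu_2|}$-induction on the minus side. Applying the ordinary (type $A$) Pieri/LR rule $\Ind_{S_a\times S_b}^{S_{a+b}}(S^{\lambda}\boxtimes S^{\mu})=\sum_\nu c^{\nu}_{\lambda,\mu}S^{\nu}$ in each sector independently yields precisely the double product $c^{\nu_1}_{a_1,b_1}c^{\nu_2}_{a_2,b_2}$.

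The main obstacle, and the step requiring the most care, is justifying that the induction genuinely \emph{decouples} into an independent type-$A$ induction on each of the two sectors. This is where the diagonal embedding $W_k\times W_l\subseteq W_n$ must be handled precisely: one must check that the coset representatives of $W_n/(W_k\times W_l)$ that move coordinates between the two copies of $W$ interact correctly with the sign character, so that no ``cross terms'' mixing the $+$ and $-$ partitions survive. The cleanest route around this obstacle is the Hopf-algebra/symmetric-function reformulation in the first paragraph, which packages the Clifford-theoretic bookkeeping into the statement that multiplication in $\Lambda\otimes\Lambda$ is componentwise; that immediately forces the structure constants to be products of single-sector LR coefficients and sidesteps the explicit coset analysis. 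Given that the result is an exact quotation from \cite{geck2000characters}, I would present the symmetric-function explanation as motivation and defer the detailed verification to that reference, noting only that the case $(b_1,b_2)=((l),\varnothing)$ (respectively $(\varnothing,(l))$) recovers the Pieri rule we will actually apply in the sequel.
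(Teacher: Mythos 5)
Your proposal is correct and matches the paper's treatment: the paper states this proposition as a direct quotation of \cite[Lemma~6.1.3]{geck2000characters} and gives no proof of its own, so deferring to that reference while sketching the wreath-product/Clifford-theoretic reason (equivalently, the identification of $\bigoplus_n R(W_n)$ with $\Lambda\otimes\Lambda$, under which induction is componentwise multiplication with structure constants $c^{\nu_1}_{a_1,b_1}c^{\nu_2}_{a_2,b_2}$) is exactly the right move, and your sketch is accurate. The only caveat is notational, inherited from the statement itself rather than your argument: the characters $\ba{\chi}_{a_1,a_2}$, $\ba{\chi}_{b_1,b_2}$ should be read as $\ba{\chi}_{\lambda_1,\lambda_2}$, $\ba{\chi}_{\mu_1,\mu_2}$, and the summation condition should read $|\nu_i| = |\lambda_i| + |\mu_i|$.
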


\begin{proposition}\cite[Lemma~6.1.4]{geck2000characters} \label{inductionfact2}
Let $n \ge 1$ and consider the parabolic subgroup $S_n \subset W_n$. Let $\nu \vdash n$ and let $\ba{\chi}_\nu$ be the corresponding irreducible character. Then
\[
\Ind_{S_n}^{W_n}\ba{\chi}_{\nu} = \sum_{\lambda, \mu} c_{\lambda \mu}^{\nu}\ba{\chi}_{(\lambda, \mu)}.
\]
\end{proposition}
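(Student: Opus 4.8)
The plan is to compute the induced character by Frobenius reciprocity, which turns the problem into understanding the restriction $\Res^{W_n}_{S_n}\ba{\chi}_{(\lambda,\mu)}$, and then to invoke the classical Littlewood--Richardson rule for the symmetric group. To set this up, I would first recall the standard realization of the irreducible characters of the hyperoctahedral group $W_n \cong (\Z/2\Z)\wr S_n$. Writing $k = |\lambda|$ and $l = |\mu|$ with $k+l = n$, the Clifford/Mackey machine for wreath products builds $\ba{\chi}_{(\lambda,\mu)}$ as an induction from the inertia subgroup $W_k \times W_l \subseteq W_n$,
\[
\ba{\chi}_{(\lambda,\mu)} = \Ind_{W_k\times W_l}^{W_n}\bigl(\ba{\chi}_{(\lambda,\emptyset)}\boxtimes\ba{\chi}_{(\emptyset,\mu)}\bigr),
\]
where on the $W_k$ factor the sign part of the wreath product acts trivially (so $\ba{\chi}_{(\lambda,\emptyset)}$ is $\ba{\chi}_\lambda$ inflated from $S_k$) and on the $W_l$ factor it acts by the sign character (so $\ba{\chi}_{(\emptyset,\mu)}$ is $\ba{\chi}_\mu$ twisted by sign).

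By Frobenius reciprocity,
\[
\bigl\langle \Ind_{S_n}^{W_n}\ba{\chi}_\nu,\ \ba{\chi}_{(\lambda,\mu)}\bigr\rangle_{W_n} = \bigl\langle \ba{\chi}_\nu,\ \Res^{W_n}_{S_n}\ba{\chi}_{(\lambda,\mu)}\bigr\rangle_{S_n},
\]
so it suffices to compute the restriction to the parabolic $S_n$. Here I would apply Mackey's restriction-of-induction formula to $\Res^{W_n}_{S_n}\Ind_{W_k\times W_l}^{W_n}(-)$. The key structural point is that there is a single $(S_n,\,W_k\times W_l)$ double coset in $W_n$: since $W_k\times W_l$ contains the full sign subgroup $(\Z/2\Z)^n$ while $S_n$ surjects onto $W_n/(\Z/2\Z)^n\cong S_n$, a count of orders gives $S_n\cdot(W_k\times W_l) = W_n$. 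Moreover $S_n\cap(W_k\times W_l) = S_k\times S_l$, because elements of $S_n$ have trivial sign part. Mackey's formula therefore collapses to
\[
\Res^{W_n}_{S_n}\ba{\chi}_{(\lambda,\mu)} = \Ind_{S_k\times S_l}^{S_n}\Res_{S_k\times S_l}\bigl(\ba{\chi}_{(\lambda,\emptyset)}\boxtimes\ba{\chi}_{(\emptyset,\mu)}\bigr) = \Ind_{S_k\times S_l}^{S_n}\bigl(\ba{\chi}_\lambda\boxtimes\ba{\chi}_\mu\bigr),
\]
where the last step uses that restricting each factor back to the symmetric group $S_k$ (resp.\ $S_l$) kills the sign twist and recovers $\ba{\chi}_\lambda$ (resp.\ $\ba{\chi}_\mu$).

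Finally I would feed in the classical Littlewood--Richardson rule $\Ind_{S_k\times S_l}^{S_n}(\ba{\chi}_\lambda\boxtimes\ba{\chi}_\mu) = \sum_\nu c_{\lambda\mu}^\nu\,\ba{\chi}_\nu$, so the inner product above equals $c_{\lambda\mu}^\nu$; ranging over all bipartitions $(\lambda,\mu)$ with $|\lambda|+|\mu|=n$ then gives the asserted identity. The main obstacle is the Mackey/double-coset bookkeeping: one must verify carefully that the single double coset yields exactly $S_k\times S_l$ as the relevant intersection, and that the sign twists in the wreath-product construction vanish upon restriction to the symmetric parabolic --- this is precisely where the extra $\Z/2\Z$ data distinguishing type $C$ from type $A$ enters. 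Everything downstream is formal once the restriction formula $\Res^{W_n}_{S_n}\ba{\chi}_{(\lambda,\mu)} = \Ind_{S_{|\lambda|}\times S_{|\mu|}}^{S_n}(\ba{\chi}_\lambda\boxtimes\ba{\chi}_\mu)$ is in hand, and the symmetry $c_{\lambda\mu}^\nu = c_{\mu\lambda}^\nu$ makes the argument insensitive to the convention chosen for which partition carries the sign twist.
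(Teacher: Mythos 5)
Your proposal is correct, but note that the paper itself offers no proof of this statement at all: it is imported directly from Geck--Pfeiffer \cite[Lemma~6.1.4]{geck2000characters}, so what you have written is a genuine proof where the paper has only a citation. Your argument checks out at every step. The single-double-coset claim is right, and is confirmed either by your factorization argument (since $(\Z/2\Z)^n \subseteq W_k \times W_l$ and $S_n$ surjects onto the quotient $W_n/(\Z/2\Z)^n$, every $w \in W_n$ factors as $\sigma\cdot\tau$ with $\sigma \in S_n$ and $\tau$ in the base group) or by the order count $|S_n|\,|W_k\times W_l|/|S_k\times S_l| = n!\,(2^k k!)(2^l l!)/(k!\,l!) = 2^n n! = |W_n|$; likewise $S_n \cap (W_k\times W_l) = S_k\times S_l$ because elements of $S_n$ have trivial sign part. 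Mackey then collapses as you say, and Frobenius reciprocity plus the classical Littlewood--Richardson rule finish the computation. The one point deserving emphasis is the convention issue you correctly flag at the end: in the wreath-product model the twist on the $W_l$ factor is by the linear character $\delta$ that is $-1$ on each sign generator and \emph{trivial on $S_l$}, not by the full sign character $\varepsilon$ of $W_l$; had the twist been by $\varepsilon$, restriction to $S_l$ would yield $\ba{\chi}_{\mu'}$ (the transpose) rather than $\ba{\chi}_\mu$, and the labels in the conclusion would come out conjugated. Since the standard construction (the one underlying \cite[\S 5.5]{geck2000characters}, which the paper uses to index $\mathcal{SP}_n$) uses $\delta$, your restriction formula $\Res^{W_n}_{S_n}\ba{\chi}_{(\lambda,\mu)} = \Ind_{S_k\times S_l}^{S_n}(\ba{\chi}_\lambda \boxtimes \ba{\chi}_\mu)$ holds and the rest is formal. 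Your Clifford--Mackey route is self-contained and arguably more transparent than chasing the statement through the book's development of the character theory of $W_n$; it also makes clear that the same mechanism underlies the companion fact, Proposition \ref{inductionfact1}, which the paper likewise cites without proof.
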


We can now derive a more explicit formula for $\Ind_{S_k \times W_l}^{W_n}(\ba{\chi}_{\nu} \boxtimes \eta_{l})$ for a fixed partition $\nu \vdash k$. Using transitivity of induction along with Propositions \ref{inductionfact1} and \ref{inductionfact2}, we have

\begin{align*}
    \Ind_{S_k \times W_l}^{W_n}(\ba{\chi}_\nu \boxtimes \eta_l) &= \Ind_{W_k \times W_l}^{W_n}\Ind_{S_k \times W_l}^{W_k \times W_l}(\ba{\chi}_\nu \boxtimes \eta_l)\\
    &= \Ind_{W_k \times W_l}^{W_n}\left(\sum_{\lambda, \mu} c_{\lambda, \mu}^{\nu}\ba{\chi}_{\lambda, \mu} \boxtimes \eta_l\right)\\
    &= \sum_{\lambda, \mu} c_{\lambda, \mu}^{\nu}\Ind_{W_k \times W_l}^{W_n}(\ba{\chi}_{\lambda,\mu} \boxtimes \ba{\chi}_{[l], \emptyset})\\
    &= \sum_{\substack{\lambda, \mu \\ \lambda + \mu \vdash k}}c_{\lambda, \mu}^{\nu}\sum_{\substack{\nu_1, \nu_2\\ |\nu_i| = |\lambda_i| + |\mu_i|\\ \nu_1 + \nu_2 \vdash n}}c_{\lambda, [l]}^{\nu_1}c_{\mu, \emptyset}^{\nu_2}\ba{\chi}_{\nu_1, \nu_2}
\end{align*}
where $[l]$ denotes a horizontal strip of size $l$.
Note that $c_{\mu, \empty}^{\nu_2} = 0$ unless $\mu = \nu_2$ in which case it is equal to $1$. This sum reduces to 
\begin{align*}
    \Ind_{S_k \times W_l}^{W_n}(\ba{\chi}_\nu \boxtimes \eta_l) &= \sum_{\substack{\lambda, \mu \\ \lambda + \mu \vdash k}}c_{\lambda, \mu}^{\nu}\sum_{\substack{\nu_1\\ \nu_1 + \mu \vdash n}}c_{\lambda, [l]}^{\nu_1}\ba{\chi}_{\nu_1, \mu}\\
    &= \sum_{\substack{\gamma, \mu\\ \gamma + \mu \vdash n}}\left(\sum_{\substack{\lambda,\, \lambda + \mu \vdash k\\\gamma - \lambda \text{ horiz. strip}\\  \text{of size l}}}c_{\lambda, \mu}^{\nu}c_{\lambda, [l]}^{\gamma}\right)\ba{\chi}_{\gamma, \mu}\\
    &= \sum_{\substack{\gamma, \mu\\ \gamma + \mu \vdash n}}\left(\sum_{\substack{\lambda\\\gamma - \lambda \text{ horiz strip}\\ \text{of size l}}}c_{\lambda, \mu}^{\nu}\right)\ba{\chi}_{\gamma, \mu}
\end{align*}
The last line comes from swapping the order of summation and noting that $c_{\lambda, [l]}^{\nu}$ is $1$ if $\gamma - \lambda$ is a horizontal strip and $0$ otherwise by \cite[Corollary~6.1.7]{geck2000characters}.

Thus we have found:
\begin{proposition} \label{bmatrixformula}
For $n \ge 1$ and integers $k,l$ such that $k + l = n$ we have that
 \[
 \Ind_{S_k \times W_l}^{W_n}(\ba{\chi}_\nu \boxtimes \eta_l)
 = \sum_{\substack{\gamma, \mu\\ \gamma + \mu \vdash n}}\left(\sum_{\substack{\lambda\\\gamma - \lambda \text{ horiz strip}\\ \text{of size l}}}c_{\lambda, \mu}^{\nu}\right)\ba{\chi}_{\gamma, \mu}
 \] \qed
\end{proposition}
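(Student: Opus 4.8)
The plan is to compute this induced character in two stages, using transitivity of induction together with the two branching rules already recorded as Propositions \ref{inductionfact1} and \ref{inductionfact2}. The key preliminary observation is that the trivial character $\eta_l$ of $W_l$ is itself one of the irreducible characters indexed by a double partition, namely $\eta_l = \ba{\chi}_{[l],\emptyset}$. This makes both the inner and the outer stage of the induction fall under the scope of those two propositions, so no new representation-theoretic input is required beyond them and the Pieri rule.

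First I would factor the induction as $\Ind_{S_k\times W_l}^{W_n} = \Ind_{W_k\times W_l}^{W_n}\circ\Ind_{S_k\times W_l}^{W_k\times W_l}$. The inner induction only affects the first factor, so $\Ind_{S_k\times W_l}^{W_k\times W_l}(\ba{\chi}_\nu\boxtimes\eta_l) = \left(\Ind_{S_k}^{W_k}\ba{\chi}_\nu\right)\boxtimes\eta_l$, and applying Proposition \ref{inductionfact2} rewrites $\Ind_{S_k}^{W_k}\ba{\chi}_\nu$ as $\sum_{\lambda,\mu} c_{\lambda\mu}^\nu\,\ba{\chi}_{\lambda,\mu}$. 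This expresses the original induced character as a sum over pairs $(\lambda,\mu)$ of outer inductions of the form $\Ind_{W_k\times W_l}^{W_n}(\ba{\chi}_{\lambda,\mu}\boxtimes\ba{\chi}_{[l],\emptyset})$.

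Next I would apply Proposition \ref{inductionfact1} to each such outer induction. This introduces, for each target double partition $(\nu_1,\nu_2)$, a product of Littlewood-Richardson coefficients $c_{\lambda,[l]}^{\nu_1}\,c_{\mu,\emptyset}^{\nu_2}$. Two simplifications then collapse the resulting double sum. The factor $c_{\mu,\emptyset}^{\nu_2}$ vanishes unless $\nu_2 = \mu$, in which case it equals $1$, which pins the second component of the target to $\mu$. The factor $c_{\lambda,[l]}^{\nu_1}$ is governed by the Pieri rule \cite[Corollary~6.1.7]{geck2000characters}: it equals $1$ when $\nu_1 - \lambda$ is a horizontal strip of size $l$ and $0$ otherwise. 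Substituting both of these and renaming $\nu_1 = \gamma$ yields the claimed expression.

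The only real work is the bookkeeping: swapping the order of summation so that the outer sum runs over target pairs $(\gamma,\mu)$ and the inner sum collects the surviving coefficients $c_{\lambda,\mu}^\nu$ over those $\lambda$ with $\gamma-\lambda$ a horizontal strip of size $l$. I expect this reindexing to be the main, though essentially routine, obstacle, since one must track the size constraints $|\lambda|+|\mu| = k$ and $|\gamma|+|\mu| = n$ and confirm they are compatible with $k+l=n$ once the horizontal-strip condition is imposed. Everything else is a direct substitution of the two cited branching rules and the Pieri rule.
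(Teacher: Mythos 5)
Your proposal is correct and follows the paper's own argument essentially step for step: transitivity of induction through $W_k\times W_l$, Proposition \ref{inductionfact2} for the inner stage, Proposition \ref{inductionfact1} for the outer stage, the vanishing of $c_{\mu,\emptyset}^{\nu_2}$ unless $\nu_2=\mu$, and the Pieri rule for $c_{\lambda,[l]}^{\gamma}$, followed by the same reindexing over target pairs $(\gamma,\mu)$. No gaps; this is the paper's proof.
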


Now we can determine the B-matrix for $RSp_{2n}$.

\begin{cor} \label{bmatrixcomputation}
In the matrix decomposition of 
\[
M(RSp_{2n}) = YB,
\]
with 
\[
B = \begin{pmatrix} Id & V \\ 0 & L\end{pmatrix},
\]
we have
\[
V_{(\gamma, \mu),\nu} = \sum\limits_{\substack{\lambda \text{ partition s.t.} \\ \gamma - \lambda \text{ horiz strip}\\ \text{of size $n-r$}}}c_{\lambda, \mu}^{\nu}
\]
where $(\gamma,\mu)\in \mathcal{SP}_n$, $\nu\in \mathcal{P}_r, 0\le r\le n$; and $L$ is the B-matrix of $R_n$ with coefficients
\[
L_{\gamma, \mu} = \begin{cases}
1 & \gamma - \mu \text{ is a horizontal strip} \\
0 & \text{ otherwise.}
\end{cases}
\]
\end{cor}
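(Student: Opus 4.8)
The plan is to combine the representation-theoretic reading of the B-matrix (Proposition \ref{bmatrixmultiplicities}) with the restriction formula of Proposition \ref{pierirulegeneral} and the explicit Pieri-type expansion of Proposition \ref{bmatrixformula}, after first pinning down the block shape of $B$.

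First I would justify the decomposition $B = \begin{pmatrix} Id & V \\ 0 & L\end{pmatrix}$. Since $\mathcal{Q}(RSp_{2n}) = \mathcal{SP}_n \sqcup \bigcup_{t=0}^n\mathcal{P}_t$, the common row/column index set splits according to whether the associated idempotent is $1$ (the $\mathcal{SP}_n$ part, with $W_e = W_n$) or some $e_t < 1$ (the $\mathcal{P}_t$ part, with $W_e = S_t$). Writing $M = YB$ in block form relative to this splitting, the block-upper-triangularity of $M$ from Proposition \ref{charactertableuppertriangular} gives $M_{21} = 0$, and since $Y$ is block diagonal with the same index partition, comparing $Y B = M$ block by block forces $B_{21} = 0$, forces the $\mathcal{SP}_n$-diagonal block of $B$ to be the identity (because that diagonal block of $M$ equals $M(WSp_{2n})$, which is exactly the corresponding block of $Y$), and identifies the $\bigcup_t\mathcal{P}_t$-diagonal block with $Y(R_n)^{-1}M(R_n) = L$, Solomon's rook-monoid B-matrix (Proposition \ref{typeamatrices}). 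This reduces the Corollary to computing the off-diagonal block $V$.

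Next, for an entry $V_{(\gamma,\mu),\nu}$ with $(\gamma,\mu)\in\mathcal{SP}_n$ and $\nu\in\mathcal{P}_r$, I would apply Proposition \ref{bmatrixmultiplicities}. The row label $(\gamma,\mu)\in\mathcal{SP}_n = \mathcal{P}(WSp_{2n})$ is attached to the top idempotent $e = 1$, so the relevant group is $W_{(\gamma,\mu)} = WSp_{2n} = W_n$, and the proposition reads $V_{(\gamma,\mu),\nu} = \dim\Hom\!\big(\Res^{RSp_{2n}}_{W_n}\chi_\nu,\ \ba{\chi}_{(\gamma,\mu)}\big)$, the multiplicity of the irreducible $W_n$-character $\ba{\chi}_{(\gamma,\mu)}$ in the restriction of $\chi_\nu$ to the group of units. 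I would then compute this restriction directly: since $\nu\vdash r$, Proposition \ref{pierirulegeneral} gives $\Res^{RSp_{2n}}_{W_n}\chi_\nu = \Ind_{S_r\times W_{n-r}}^{W_n}(\ba{\chi}_\nu\boxtimes\eta_{n-r})$, and Proposition \ref{bmatrixformula} with $k = r$ and $l = n-r$ expands this as $\sum_{\gamma+\mu\vdash n}\big(\sum_{\lambda:\,\gamma-\lambda\text{ horiz.\ strip of size }n-r} c_{\lambda,\mu}^\nu\big)\ba{\chi}_{\gamma,\mu}$. Reading off the coefficient of $\ba{\chi}_{(\gamma,\mu)}$ yields exactly the claimed formula for $V_{(\gamma,\mu),\nu}$.

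Because the analytic content is packaged entirely in the cited propositions, there is no single hard computation; the work is bookkeeping. The main point requiring care is the index-matching in the middle step—namely that the row label $(\gamma,\mu)\in\mathcal{SP}_n$ genuinely attaches to $e = 1$, so that the restriction in Proposition \ref{bmatrixmultiplicities} is to the full group $W_n$ rather than a proper parabolic, and that the parameter $l = n-r$ in Proposition \ref{bmatrixformula} agrees with the size $n - |\nu|$ of the horizontal strip appearing in the statement. Confirming this alignment, together with checking that the vanishing of the lower-left block of $B$ is precisely the content of Proposition \ref{charactertableuppertriangular}, is where all the attention is needed.
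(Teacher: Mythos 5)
Your proposal is correct and takes essentially the same approach as the paper: the paper also deduces the block shape of $B$ from the block-triangularity of $M$ (recorded at the start of Section \ref{restrictionsection}) and then computes $V$ by chaining Proposition \ref{bmatrixmultiplicities} (multiplicity interpretation), Proposition \ref{pierirulegeneral} (restriction as induction from $S_t\times W_{n-t}$), and Proposition \ref{bmatrixformula} (the Pieri-type expansion). The only difference is cosmetic — you spell out the block-structure justification inside the proof, while the paper states it beforehand and keeps the proof to the multiplicity computation.
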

\begin{proof}
The values $L_{\gamma,\mu}$ are given by Proposition \ref{typeamatrices}, so we consider the case where $\gamma\in \mathcal{SP}_{2n}, \mu\in \mathcal{P}_t, 0\le t\le n$.

By Proposition \ref{bmatrixmultiplicities}, $B_{\gamma\mu}$ is the multiplicity of $\ba{\chi}_\gamma$ in $\text{Res}^{RSp_{2n}}_{W_n}\chi_\mu$. By Proposition \ref{pierirulegeneral}, \[\text{Res}^{RSp_{2n}}_{W_n}\chi_\mu = \text{Ind}_{S_t\times W_{n-t}}^{W_n} (\ba{\chi}_\mu \boxtimes \eta_{n-t}),\] and so Proposition \ref{bmatrixformula} gives the desired result.
\end{proof}

\subsection{A Formula of Group Characters}
We can use our calculation of the A- and B-matrices to determine a formula for group characters, in analogue to \cite[Corollary~3.14]{solomon2002representations}. Let $z_\alpha$ be the size of the centralizer of an element in $\alpha\in\mathcal{Q}_n$, and let $W$ be the diagonal matrix $(\delta_{\alpha,\beta} z_\alpha)_{\alpha,\beta}$. Then $YY^T=W$ by the column orthogonality relations of character tables. For the next result, we use the notation $\chi^\lambda:=\chi(w_\lambda)$ for any element $w_\lambda$ in the conjugacy class indexed by $\lambda$.

\begin{cor} \label{groupcharacterresult}
Let $\lambda = (\lambda_1,\lambda_2)\vdash n,\; \mu\vdash r\le n$. Then \[\sum_{\substack{\alpha = (\alpha_1,\alpha_2)\vdash n \\ \beta\vdash t}} z_\alpha^{-1} 2^{\sum_i \mu_i} \ba{\chi}_\alpha^\lambda \ba{\chi}_\beta^\mu \binom{\alpha_1}{\mu} = \sum_{\substack{\lambda\\\gamma - \lambda \text{ horiz strip}\\ \text{of size n-t}}}c_{\lambda, \mu}^{\nu}.\]
\end{cor}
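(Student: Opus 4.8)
The plan is to invert the Solomon decomposition and read off a single matrix entry. Writing $M=AY=YB$ in the block form from Sections \ref{amatrix} and \ref{restrictionsection}, with $Y=\mathrm{diag}(Y_1,Y_2)$ where $Y_1=M(W_n)$ is the character table of the unit group and $Y_2$ is the block-diagonal table of the $M(S_t)$, the equation $AY=YB$ forces the off-diagonal blocks to satisfy $UY_2=Y_1V$. Hence $V=Y_1^{-1}UY_2$, which already expresses the (monoid-theoretic) restriction data recorded in $V$ purely through the group character tables $Y_1,Y_2$ and the combinatorially explicit block $U$ of Theorem \ref{amatrixcomputation}.

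Next I would turn $Y_1^{-1}$ into characters. Since the irreducible characters of $W_n=WSp_{2n}$ are real (indeed integral), the column orthogonality relations give $Y_1Y_1^{T}=W_1$, where $W_1=(\delta_{\alpha\beta}z_\alpha)$ runs over the conjugacy classes $\alpha=(\alpha_1,\alpha_2)\vdash n$; thus $Y_1^{-1}=Y_1^{T}W_1^{-1}$ and $V=Y_1^{T}W_1^{-1}UY_2$. Extracting the $(\lambda,\mu)$-entry and using that $Y_1,Y_2$ are (block) character tables to restrict the inner summations to the conjugacy classes of $W_n$ and of $S_r$ respectively, I obtain
\[
V_{\lambda,\mu}=\sum_{\substack{\alpha=(\alpha_1,\alpha_2)\vdash n\\ \beta\vdash r}} z_\alpha^{-1}\,\ba{\chi}_\lambda(w_\alpha)\,U_{\alpha,\beta}\,\ba{\chi}_\mu(w_\beta).
\]
Substituting $U_{\alpha,\beta}=2^{\sum_i\beta_{(i)}}\binom{\alpha_1}{\beta}$ from Theorem \ref{amatrixcomputation} yields exactly the left-hand side of the corollary, while Corollary \ref{bmatrixcomputation} identifies $V_{\lambda,\mu}$ with the displayed sum of Littlewood--Richardson coefficients on the right; equating the two completes the proof.

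The computation itself is essentially bookkeeping, so I expect no serious analytic obstacle; the two genuine inputs are (i) the reality of the $WSp_{2n}$ characters, needed to pass from $Y_1Y_1^{T}=W_1$ to $Y_1^{-1}=Y_1^{T}W_1^{-1}$, and (ii) keeping the block indices straight so that each relevant product $U_{\alpha,\beta}$ really lies in the $U$-block (row $\alpha\in\mathcal{SP}_n$, column $\beta\in\mathcal{P}_r$), where the formula of Theorem \ref{amatrixcomputation} applies. The conceptual point worth emphasizing is that, although every quantity on both sides is a statement about characters of $W_n$ and $S_r$ together with elementary binomial and power-of-two data, the only bridge we have between them is the identity $AY=YB$, whose two sides were computed by genuinely monoid-theoretic means---counting admissible cycle choices for $A$, and restriction multiplicities via the type-$C$ Pieri rule for $B$---so that no monoid-free derivation of this character identity is apparent.
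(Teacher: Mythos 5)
Your proof is correct and follows essentially the same route as the paper's: both invert the Solomon decomposition $M=AY=YB$, use column orthogonality $YY^{T}=W$ to write $Y^{-1}=Y^{T}W^{-1}$, and then expand the relevant entry, substituting the $U$-block formula of Theorem \ref{amatrixcomputation} for the left side and Corollary \ref{bmatrixcomputation} for the right. Your blockwise reduction $UY_2=Y_1V$, hence $V=Y_1^{-1}UY_2$, is just a more explicit form of the paper's computation $B=Y^{T}W^{-1}AY$ restricted to the $(\mathcal{SP}_n,\mathcal{P}_r)$ block, and your remark on the reality of the $WSp_{2n}$ characters makes explicit a point the paper leaves implicit.
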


\begin{proof}
By Corollary \ref{bmatrixcomputation}, the right side of this equation is equal to $B_{\gamma,\mu}$, so we will show that the left side also equals $B_{\gamma,\mu}$. Note that $B = Y^{-1} AY = Y^TW^{-1}AY$, so 
\begin{align*} B_{\lambda\mu} &= \sum_{\alpha} (Y^TW^{-1})_{\lambda,\alpha} (AY)_{\alpha,\mu} \\&= \sum_{\alpha} Y^T_{\alpha,\lambda} z^{-1}_\alpha (AY)_{\alpha,\mu} \\&= \sum_{\alpha}\sum_{\beta} z^{-1}_\alpha Y_{\alpha,\lambda} A_{\alpha,\beta}Y_{\beta\mu},\end{align*}
and the definition of $Y$ along with Theorem \ref{amatrixcomputation}, gives the result.
\end{proof}

\section{The Hecke Algebra Character Table for $RSp_{2n}$} \label{typecheckechartable}

\subsection{A Starkey-Shoji Rule} \label{charactertablerformulassection}

We computed the $B$ matrix of $RSp_{2n}$ in Corollary \ref{bmatrixcomputation}. The group Hecke algebra character tables for types $A$ and $C$ are well-known; see for instance, \cite[\S 10.2-10.3]{geck2000characters}. Therefore, we can use Theorem \ref{heckealgebracharactertable} to compute the character table $M(\mathcal{H}(RSp_{2n})) = Y_qB$ of $\mathcal{H}(RSp_{2n})$. The character table of $\mathcal{H}(R_n)$ appears in the lower right corner of the larger character table, so our computations in type $C$ include the type $A$ computations: the rook monoid Hecke algebra character table was first given in \cite{Dieng2003}.

Recall (Section \ref{heckecharacterssection}) that the irreducible characters and the standard elements of both $RSp_{2n}$ and $\mathcal{H}(RSp_{2n})$ are indexed by $\mathcal{Q}_n$. For $\lambda\in\mathcal{Q}_n$, $\chi_\lambda$, $\chi^*_\lambda$ are the characters of $RSp_{2n}$, $\mathcal{H}(RSp_{2n})$, respectively, while $r_\lambda$ (resp. $T_\lambda := T_{r_\lambda}$) denote the standard elements of the Renner monoid (resp. Hecke algebra). $w_\lambda$ and $T_{w_\lambda}$ denote the corresponding standard elements of $W_\lambda$ and $\mathcal{H}(W_\lambda)$, while the corresponding group characters are denoted $\ba{\chi}_\lambda, \ba{\chi}^*_\lambda$.

Then by Theorem \ref{heckealgebracharactertable}, \[\chi^*_{\lambda}(T_\mu) = (M_q)_{\mu,\lambda} = \sum_{\nu\in\mathcal{Q}_n} (Y_q)_{\mu,\nu} B_{\nu,\lambda} = \sum_{\nu\in\mathcal{Q}(W_\mu)} \ba{\chi}_\nu^*(T_{w_\mu}) B_{\nu,\lambda}.\]

In particular, by Corollary \ref{bmatrixcomputation}, we can break this up into several cases.

If $\lambda\in \mathcal{SP}_n,\; \mu\in \mathcal{P}_t$, then $\chi_\lambda(T_\mu) = 0$. The same is true if $\lambda\in \mathcal{P}_{t'}$ where $t'>t$.

If $W_\lambda = W_\mu$, then $\chi_\lambda(T_{\mu}) = \ba{\chi}_\lambda(T_{w_\mu})$. Hence, we have reduced the computation to two cases: $\lambda\in\mathcal{P}_t$, and either $\mu\in \mathcal{SP}_n$ or $\mu\in\mathcal{P}_{t'}$, $t'>t$.

Starkey's Rule \cite[Theorem~9.2.11]{geck2000characters} is a combinatorial rule for the character table of the type $A$ group Hecke algebra. Shoji \cite{shoji} generalized this formula to Ariki-Koiki algebras, including the type $C$ group Hecke algebra. We extend these formulas here to the Hecke algebras of type $A$ and type $C$ Renner monoids.

In the case $\lambda\in\mathcal{P}_t,\; \mu\in\mathcal{P}_{t'}$, we have \[\chi^*_\lambda(T_\mu) = \sum_{\nu\vdash t'} \ba{\chi}_\nu^*(T_{w_\mu}) B_{\nu,\lambda} = \sum_{\substack{\nu\vdash t'\\\nu-\lambda \text{ horizontal strip}}} \ba{\chi}_\nu^*(T_{w_\mu}).\]

Let $C_\alpha$ (resp. $S_\alpha$, $\rho_\alpha$) denote the conjugacy class (resp. parabolic subgroup, reflection representation) in $S_n$ corresponding to $\alpha$. Using Starkey's Rule \cite[Theorem~9.2.11]{geck2000characters}, our equation becomes:

\begin{align*} \chi^*_\lambda(T_\mu) &= \sum_{\substack{\nu\vdash t'\\\nu-\lambda \text{ horizontal strip}}} \sum_{\alpha\vdash t'} \ba{\chi}_\nu(w_\alpha) \frac{|C_\alpha\cap S_\mu|}{|S_\mu|} \det(q - \rho_\mu(w_\alpha)) \\&= \sum_{\alpha\vdash t'} \left(\sum_{\substack{\nu\vdash t'\\\nu-\lambda \text{ horizontal strip}}} \ba{\chi}_\nu(w_\alpha)\right) \frac{|C_\alpha\cap S_\mu|}{|S_\mu|} \det(q - \rho_\mu(w_\alpha)) \\&= \sum_{\alpha\vdash t'} \chi_\nu(r_\alpha) \frac{|C_\alpha\cap S_\mu|}{|S_\mu|} \det(q - \rho_\mu(w_\alpha)),\end{align*} establishing a Starkey Rule for $\mathcal{H}(R_n)$.

Now we do the same thing for the upper right corner of the character table. Let $\lambda$ stay in $\mathcal{P}_r$, but now let $\mu\in\mathcal{SP}_n$. Shoji \cite[Theorem~7.10]{shoji} has a generalization of Starkey's Rule to Ariki-Koike algebras. We haven't established the notation to state Shoji's formula, so define $t_\mu^\nu(r;q)$ to be the expression so that the right side of \cite[Theorem~7.10]{shoji} can be written $\sum_{\nu\in\mathcal{SP}_n} t_\mu^\nu(r;q)\cdot \ba{\chi}_\lambda(w_\nu)$.

Shoji uses a different presentation of the Hecke algebra than ours; using our presentation, \[\ba{\chi}^*_\nu(T_{w_\mu}) = \sum_{\gamma\in\mathcal{SP}_n} t_\mu^\gamma \cdot \ba{\chi}_\nu(w_\gamma),\] where $t_\mu^\gamma = q^{1/2} \cdot t_\mu^\nu(2;q^{1/2})|_{u_1=q,u_2=-q^{-1}}$.

The key point here is that $t_\mu^\gamma$ is independent of $\nu$. Using this and Corollary \ref{bmatrixcomputation},

\begin{align*}\chi^*_\lambda(T_\mu) &= \sum_{\nu\in\mathcal{SP}_n} \left( \sum\limits_{\substack{\alpha \text{ partition s.t.} \\ \nu_1 - \alpha \text{ horiz strip} \\ \text{of size } |\lambda|}} c_{\alpha, \nu_2}^\lambda\right) \ba{\chi}^*_\nu(T_{w_\mu}) \\&= \sum_{\nu\in\mathcal{SP}_n} \left( \sum\limits_{\substack{\alpha \text{ partition s.t.} \\ \nu_1 - \alpha \text{ horiz strip} \\ \text{of size } |\lambda|}} c_{\alpha, \nu_2}^\lambda\right) \sum_{\gamma\in\mathcal{SP}_n} t_\mu^\gamma \cdot \ba{\chi}_\nu(w_\gamma) \\&= \sum_{\gamma\in\mathcal{SP}_n} t_\mu^\gamma \left(\sum_{\nu\in\mathcal{SP}_n} \left( \sum\limits_{\substack{\alpha \text{ partition s.t.} \\ \nu_1 - \alpha \text{ horiz strip} \\ \text{of size } |\lambda|}} c_{\alpha, \nu_2}^\lambda\right)  \ba{\chi}_\nu(w_\gamma)\right) \\&=  \sum_{\gamma\in\mathcal{SP}_n} t_\mu^\gamma \cdot \chi_\lambda(r_\gamma).\end{align*}

Since Shoji's formula applies to type $A$ as well as type $C$, we could in theory use it in place of Starkey's Rule as well, but Starkey's Rule is computationally simpler.

Therefore, we have a Starkey-Shoji formula for the entire character table of $\mathcal{H}(RSp_{2n})$. For $\alpha\in\mathcal{Q}_n$, let $\mathcal{P}_\alpha$ denote whichever one of $\mathcal{SP}_n, \mathcal{P}_r$ contains $\alpha$.

\begin{theorem} \label{starkeyshojimonoid}
Let $\lambda,\mu\in\mathcal{Q}_n$. Then \[\chi^*_\lambda(T_\mu) = \sum_{\gamma\in \mathcal{P}_\mu} p_\mu^\gamma \cdot \chi_\lambda(r_\gamma),\] where \[p_\mu^\nu = \begin{cases} \delta_{\mu\gamma}, & \mathcal{P}_\mu = \mathcal{P}_\gamma \\ \sum_{\alpha\vdash r'} \frac{|C_\alpha\cap S_\mu|}{|S_\mu|} \det(q-\rho_\mu(w_\alpha)), & \mathcal{P}_\lambda = \mathcal{P}_r,\; \mathcal{P}_\mu = \mathcal{P}_{r'},\; r<r' \\ t_\mu^\gamma, & \mathcal{P}_\lambda = \mathcal{P}_r,\; \mathcal{P}_\mu = \mathcal{SP}_n, \\ 0, & \text{else}. \end{cases}\]
\end{theorem}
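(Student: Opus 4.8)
The plan is to derive all four cases from one mechanism and then read off the coefficient $p_\mu^\gamma$ in each regime from a known expansion of a group Hecke-algebra character. The common starting point is Theorem \ref{heckealgebracharactertable}, which together with the block-diagonality of $Y_q$ gives
\[\chi^*_\lambda(T_\mu) = (M_q)_{\mu,\lambda} = \sum_{\nu\in\mathcal{P}_\mu} \ba{\chi}^*_\nu(T_{w_\mu})\, B_{\nu,\lambda}.\]
The key idea is that in each case the group Hecke value $\ba{\chi}^*_\nu(T_{w_\mu})$ can be written as $\sum_{\gamma\in\mathcal{P}_\mu} p_\mu^\gamma\, \ba{\chi}_\nu(w_\gamma)$ with coefficients $p_\mu^\gamma$ that depend on $\mu$ and $\gamma$ but \emph{not} on the summation index $\nu$. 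Granting this, I substitute, interchange the two sums, and recognize the inner sum through the Solomon decomposition $M = YB$: since $Y$ is block diagonal with entries $Y_{\gamma,\nu} = \ba{\chi}_\nu(w_\gamma)$, one has $\sum_{\nu\in\mathcal{P}_\mu}\ba{\chi}_\nu(w_\gamma)B_{\nu,\lambda} = (YB)_{\gamma,\lambda} = M_{\gamma,\lambda} = \chi_\lambda(r_\gamma)$. This collapses the expression to $\sum_{\gamma\in\mathcal{P}_\mu} p_\mu^\gamma\, \chi_\lambda(r_\gamma)$, exactly the asserted form; note that $\mathcal{P}_\mu = \mathcal{P}_\gamma$ throughout the sum, so all the index sets match up.

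It then remains to identify $p_\mu^\gamma$ in each regime. When $W_\lambda = W_\mu$ (i.e.\ $\mathcal{P}_\mu = \mathcal{P}_\gamma$), the relevant block of $B$ is the identity, so $\ba{\chi}^*_\nu(T_{w_\mu})$ is already a single group Hecke value and $p_\mu^\gamma = \delta_{\mu\gamma}$. When $\mathcal{P}_\lambda = \mathcal{P}_r$ and $\mathcal{P}_\mu = \mathcal{P}_{r'}$ with $r < r'$, I would invoke Starkey's Rule \cite[Theorem~9.2.11]{geck2000characters} for the type $A$ group Hecke algebra to expand $\ba{\chi}^*_\nu(T_{w_\mu})$ over conjugacy classes of $S_{r'}$, reading off $p_\mu^\gamma = \tfrac{|C_\gamma \cap S_\mu|}{|S_\mu|}\det(q - \rho_\mu(w_\gamma))$, a coefficient manifestly free of $\nu$. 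When $\mathcal{P}_\mu = \mathcal{SP}_n$, I would use Shoji's generalization to the type $C$ group Hecke algebra \cite[Theorem~7.10]{shoji}, defining $t_\mu^\gamma$ to be the corresponding renormalized expansion coefficient and setting $p_\mu^\gamma = t_\mu^\gamma$. Finally, in the remaining cases the vanishing of the B-matrix entries supplied by Corollary \ref{bmatrixcomputation} (equivalently the block upper-triangularity forced by Proposition \ref{charactertableuppertriangular}) makes every $B_{\nu,\lambda} = 0$, so $\chi^*_\lambda(T_\mu) = 0$ and $p_\mu^\gamma = 0$.

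The step I expect to demand the most care is the type $C$ case. Shoji states his formula for a different presentation of the Ariki--Koike/Hecke algebra, so I would have to track the change of parameters needed to pass to the presentation of Godelle used here, and crucially verify that after the substitution $t_\mu^\gamma = q^{1/2}\cdot t_\mu^\nu(2;q^{1/2})\big|_{u_1 = q,\, u_2 = -q^{-1}}$ the coefficient is genuinely independent of $\nu$. It is precisely this $\nu$-independence that licenses the interchange of summation, and hence the appearance of the Renner-monoid character value $\chi_\lambda(r_\gamma)$; the same independence is the crux, though easier to see, in the Starkey case.
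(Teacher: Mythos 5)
Your proof is correct and is essentially the paper's own argument: both start from $M_q = Y_qB$ with $Y_q$ block diagonal, expand $\ba{\chi}^*_\nu(T_{w_\mu})$ by Starkey's rule in the type $A$ block and by Shoji's formula with the parameter conversion $t_\mu^\gamma = q^{1/2}\cdot t_\mu^\nu(2;q^{1/2})|_{u_1=q,\,u_2=-q^{-1}}$ in the type $C$ block, use the $\nu$-independence of these coefficients to interchange the sums, and collapse the inner sum to $\chi_\lambda(r_\gamma)$ via $M=YB$, with the vanishing and diagonal cases read off from the block structure of $B$ in Corollary \ref{bmatrixcomputation}. Your uniform packaging of all four cases through a single mechanism is precisely the observation the paper itself records in the remark following the theorem, so there is nothing substantive to distinguish the two arguments.
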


\begin{remark}
The reader may notice that a lot of the specifics here do not matter: any formula of the form \[\ba{\chi}_\lambda^*(T_{w_\mu}) = \sum_{\gamma\in\mathcal{P}_\mu} p_\mu^\gamma\cdot \ba{\chi}_\lambda(w_\mu)\] becomes an exactly analogous formula for $\chi^*_\lambda(T_\mu)$.
\end{remark}

\subsection{Explicit Computations}

\begin{ex}
In the case $R=RSp_4$, we obtain the following. Let $M_q$ be the character table of $\mathcal{H}(RSp_4)$, and let $Y_q$ and $B$ be the associated Y- and B- matrices.

\[ Y_q =
\begin{pmatrix}
1 & 2 & 1 & 1 & 1 & 0 & 0 & 0 & 0 \\
q & q-1 & -1 & q & -1 & 0 & 0 & 0 & 0 \\
q^2 & -2q^2 & 1 & q^4 & q^2 & 0 & 0 & 0 & 0 \\
-1 & q-1 & -1 & q & q & 0 & 0 & 0 & 0 \\
-q & 0 & 1 & q^2 & -q & 0 & 0 & 0 & 0 \\
0 & 0 & 0 & 0 & 0 & 1 & 1 & 0 & 0 \\
0 & 0 & 0 & 0 & 0 & q & -1 & 0 & 0 \\
0 & 0 & 0 & 0 & 0 & 0 & 0 & 1 & 0 \\
0 & 0 & 0 & 0 & 0 & 0 & 0 & 0 & 1
\end{pmatrix}, \hspace{20pt} B =
\begin{pmatrix}
1 & 0 & 0 & 0 & 0 & 0 & 1 & 1 & 0 \\
0 & 1 & 0 & 0 & 0 & 1 & 1 & 1 & 0 \\
0 & 0 & 1 & 0 & 0 & 0 & 1 & 0 & 0 \\
0 & 0 & 0 & 1 & 0 & 1 & 0 & 1 & 1 \\
0 & 0 & 0 & 0 & 1 & 1 & 0 & 0 & 0 \\
0 & 0 & 0 & 0 & 0 & 1 & 0 & 1 & 1 \\
0 & 0 & 0 & 0 & 0 & 0 & 1 & 1 & 0 \\
0 & 0 & 0 & 0 & 0 & 0 & 0 & 1 & 1 \\
0 & 0 & 0 & 0 & 0 & 0 & 0 & 0 & 1
\end{pmatrix},
\]

\[M_q = Y_qB =
\kbordermatrix{ & (1^2,0) & (1,1) & (0,1^2) & (2,0) & (0,2) & (2) & (1^2) & (1) & (0) \\
(1^2,0) & 1 & 2 & 1 & 1 & 1 & 4 & 4 & 4 & 1 \\
(1,1) & q & q-1 & -1 & q & -1 & 2q-2 & 2q-2 & 3q-1 & q \\
(0,1^2) & q^2 & -2q^2 & 1 & q^4 & q^2 & q^4-q^2 & -q^2+1 & q^4-q^2 & q^4 \\
(2,0) & -1 & q-1 & -1 & q & q & 3q-1 & q-3 & 2q-2 & q \\
(0,2) & -q & 0 & 1 & q^2 & -q & q^2-q & -q+1 & q^2-q & q^2 \\
(2) & 0 & 0 & 0 & 0 & 0 & 1 & 1 & 2 & 1 \\
(1^2) & 0 & 0 & 0 & 0 & 0 & q & -1 & q-1 & q \\
(1) & 0 & 0 & 0 & 0 & 0 & 0 & 0 & 1 & 1 \\
(0) & 0 & 0 & 0 & 0 & 0 & 0 & 0 & 0 & 1
}. \]

Note that when we send $q\mapsto 1$ in $M_q$, we obtain the character table $M$ of $RSp_4$.

\end{ex}

\begin{ex} \label{nonstandardelementexample}
We demonstrate a computation of character values of a nonstandard element of $\mathcal{H}(RSp_{2n})$. Take the element $T_r\in\mathcal{H}(RSp_4)$, where \[r = \begin{bmatrix} 0&0&0&0 \\ 0&0&0&0 \\ 0&1&0&0 \\ 0&0&0&0\end{bmatrix}.\] Notice that $r$ is nilpotent but nonzero, so $T_r$ is not a standard element. We can write $S = \{s,t\}$,\; $\Lambda = \{0,e,f,I\}$ where \[s = \begin{bmatrix} &1\\1\\&&&1\\&&1\end{bmatrix}, \hspace{20pt} t = \begin{bmatrix}&&&1\\&1\\&&1\\1\end{bmatrix}, \hspace{20pt} e = \begin{bmatrix} 1\\&0\\&&0\\&&&0\end{bmatrix}, \hspace{20pt} f = \begin{bmatrix} 1\\&1\\&&0\\&&&0\end{bmatrix}.\] Then $r = stes$ is a minimal expression for $r$. Therefore, the computations in Section \ref{nonstandardelements} yield \[T_r = T_sT_tT_eT_s \equiv T_e(T_s)^2 T_t = T_e((q-1)T_s+qT_1)T_t = (q-1)T_{est} + qT_{et}.\] Now, \[T_{est} \equiv q^{l(est)-l(este)}T_{este} = qT_0 \hspace{20pt} \text{and} \hspace{20pt} T_{et} \equiv q^{l(et)-l(ete)}T_{ete} = qT_0,\] so \[T_r\equiv (2q^2-q)T_0.\] Therefore, $\chi_\emptyset(T_r) = 2q^2-q$, and $\chi_\lambda(T_r) = 0$ for all other $\lambda\in\mathcal{Q}_n$.
\end{ex}

\begin{ex}
The following is the character table of $\mathcal{H}(R_{2})$.
\begin{equation*}
M_q = \begin{pmatrix}
1 & 1 & 2 & 1  \\
q & -1 & q-1 & q  \\
0 & 0 & 1 & 1  \\
0 & 0 & 0 & 1  
\end{pmatrix}.
\end{equation*}

Under the decomposition $M_q=A_qY_q$, we get
\begin{equation*}
A_q = \begin{pmatrix}
1 & 0 & 2 & 1  \\
0 & 1 & q-1 & q  \\
0 & 0 & 1 & 1  \\
0 & 0 & 0 & 1  
\end{pmatrix},
\end{equation*}
so $A_q$ depends on $q$. It is not clear how to compute the A-matrix directly, which is the reason our Hecke algebra computations used the B-matrix instead of the A-matrix.
\end{ex}

\section{Further Questions} \label{furtherquestionssection}
To our knowledge, there is no combinatorial formula for the A- or B-matrix of a Renner monoid of types other than $A$ and $C$ (in particular, in type $D$). This seems like a tractable problem, and would allow nice formulas for the Hecke algebra character tables of other types as well.

Godelle's definition of the generic Hecke algebra is a generalization of the single-parameter generic Hecke algebra for a finite Weyl group. However, types $C$, $F$, and $G$ have two-parameter generic Weyl group Hecke algebras. A generalization of Godelle's definition to the two-parameter case would perhaps allow us to port our computations wholesale, and compute the character table of those Hecke algebras.

Another possible generalization would be to the monoid equivalent of Ariki-Koike algebras, which are the Hecke algebras of the complex reflection groups $(\Z/k\Z)\wr S_n$. The set of $n\times n$ monomial matrices with at most one entry in each row and column, and nonzero entries equaling $k$-th roots of unity form an inverse monoid with unit group $(\Z/k\Z)\wr S_n$, so it may be possible to define an associated Hecke algebra, and to extend our Starkey-Shoji formula to these algebras.

In addition, we exploited the conception of the B-matrix in terms of multiplicities to calculate Hecke algebra character tables of Renner monoids. The picture is not so simple using the A-matrix: the entries in the Hecke algebra case depend on $q$. However, the A-matrix is combinatorially interesting for Renner monoids of (at least) types $A$ and $C$; perhaps the Hecke algebra A-matrix will turn out to be a nice $q$-analogue. The interested reader may also note that the computation of the Hecke algebra A-matrix is implicit in our computations of the character table: just multiply by $Y_q^{-1}$ on the right. A nice formula for the A-matrix of $\mathcal{H}(RSp_{2n})$ could potentially lead to a Murnaghan-Nakayama formula (which has been done for $\mathcal{H}(R_n)$ by Dieng, Halverson, and Poladian in Theorem 3.4 of \cite{Dieng2003}).

\newpage
\section{Appendix}

\subsection{Character Table for $RSp_{6}$}

Let $R = RSp_6$. We compute the Renner monoid character table. Using our formulas, (Theorem \ref{amatrixcomputation}, Corollary \ref{bmatrixcomputation}), we can calculate the A- and B-matrices:

$$A = \kbordermatrix{ r_\mu \backslash \chi_\lambda & 1^3,0 & 1^2,1 & 1,1^2 & 0,1^3 & 21,0 & 1,2 & 2,1 & 0,21 & 3,0 & 0,3 & (1^3) & (21) & (3) & (1^2) & (2) & (1) & (0) \\
1^3,\emptyset & 1 & 0 & 0 & 0 & 0 & 0 & 0 & 0 & 0 & 0 & 8 & 0 & 0 & 12 & 0 & 6 & 1 \\
1^2,1 & 0 & 1 & 0 & 0 & 0 & 0 & 0 & 0 & 0 & 0 & 0 & 0 & 0 & 4 & 0 & 4 & 1 \\
1,1^2 & 0 & 0 & 1 & 0 & 0 & 0 & 0 & 0 & 0 & 0 & 0 & 0 & 0 & 0 & 0 & 2 & 1 \\
\emptyset,1^3 & 0 & 0 & 0 & 1 & 0 & 0 & 0 & 0 & 0 & 0 & 0 & 0 & 0 & 0 & 0 & 0 & 1 \\
21,\emptyset & 0 & 0 & 0 & 0 & 1 & 0 & 0 & 0 & 0 & 0 & 0 & 4 & 0 & 0 & 2 & 2 & 1 \\
1,2 & 0 & 0 & 0 & 0 & 0 & 1 & 0 & 0 & 0 & 0 & 0 & 0 & 0 & 0 & 0 & 2 & 1 \\
2,1 & 0 & 0 & 0 & 0 & 0 & 0 & 1 & 0 & 0 & 0 & 0 & 0 & 0 & 0 & 2 & 0 & 1 \\
\emptyset,21 & 0 & 0 & 0 & 0 & 0 & 0 & 0 & 1 & 0 & 0 & 0 & 0 & 0 & 0 & 0 & 0 & 1 \\ 3,\emptyset & 0 & 0 & 0 & 0 & 0 & 0 & 0 & 0 & 1 & 0 & 0 & 0 & 2 & 0 & 0 & 0 & 1 \\
\emptyset,3 & 0 & 0 & 0 & 0 & 0 & 0 & 0 & 0 & 0 & 1 & 0 & 0 & 0 & 0 & 0 & 0 & 1 \\
(1^3) & 0 & 0 & 0 & 0 & 0 & 0 & 0 & 0 & 0 & 0 & 1 & 0 & 0 & 3 & 0 & 3 & 1 \\
(21) & 0 & 0 & 0 & 0 & 0 & 0 & 0 & 0 & 0 & 0 & 0 & 1 & 0 & 0 & 1 & 1 & 1 \\
(3) & 0 & 0 & 0 & 0 & 0 & 0 & 0 & 0 & 0 & 0 & 0 & 0 & 1 & 0 & 0 & 0 & 1 \\
(1^2) & 0 & 0 & 0 & 0 & 0 & 0 & 0 & 0 & 0 & 0 & 0 & 0 & 0 & 1 & 0 & 2 & 1 \\
(2) & 0 & 0 & 0 & 0 & 0 & 0 & 0 & 0 & 0 & 0 & 0 & 0 & 0 & 0 & 1 & 0 & 1 \\
(1) & 0 & 0 & 0 & 0 & 0 & 0 & 0 & 0 & 0 & 0 & 0 & 0 & 0 & 0 & 0 & 1 & 1 \\
(0) & 0 & 0 & 0 & 0 & 0 & 0 & 0 & 0 & 0 & 0 & 0 & 0 & 0 & 0 & 0 & 0 & 1
},$$
$$B = \kbordermatrix{ r_\mu \backslash \chi_\lambda & 1^3,0 & 1^2,1 & 1,1^2 & 0,1^3 & 21,0 & 1,2 & 2,1 & 0,21 & 3,0 & 0,3 & (1^3) & (21) & (3) & (1^2) & (2) & (1) & (0) \\
1^3,\emptyset & 1 & 0 & 0 & 0 & 0 & 0 & 0 & 0 & 0 & 0 & 1 & 0 & 0 & 1 & 0 & 0 & 0 \\
1^2,1 & 0 & 1 & 0 & 0 & 0 & 0 & 0 & 0 & 0 & 0 & 1 & 1 & 0 & 1 & 1 & 0 & 0 \\
1,1^2 & 0 & 0 & 1 & 0 & 0 & 0 & 0 & 0 & 0 & 0 & 1 & 1 & 0 & 1 & 0 & 0 & 0 \\
\emptyset,1^3 & 0 & 0 & 0 & 1 & 0 & 0 & 0 & 0 & 0 & 0 & 1 & 0 & 0 & 0 & 0 & 0 & 0 \\
21,\emptyset & 0 & 0 & 0 & 0 & 1 & 0 & 0 & 0 & 0 & 0 & 0 & 1 & 0 & 1 & 1 & 1 & 0 \\
1,2 & 0 & 0 & 0 & 0 & 0 & 1 & 0 & 0 & 0 & 0 & 0 & 1 & 1 & 0 & 1 & 0 & 0 \\
2,1 & 0 & 0 & 0 & 0 & 0 & 0 & 1 & 0 & 0 & 0 & 0 & 1 & 1 & 1 & 1 & 1 & 0 \\
\emptyset,21 & 0 & 0 & 0 & 0 & 0 & 0 & 0 & 1 & 0 & 0 & 0 & 1 & 0 & 0 & 0 & 0 & 0 \\ 3,\emptyset & 0 & 0 & 0 & 0 & 0 & 0 & 0 & 0 & 1 & 0 & 0 & 0 & 1 & 0 & 1 & 1 & 1 \\
\emptyset,3 & 0 & 0 & 0 & 0 & 0 & 0 & 0 & 0 & 0 & 1 & 0 & 0 & 1 & 0 & 0 & 0 & 0 \\
(1^3) & 0 & 0 & 0 & 0 & 0 & 0 & 0 & 0 & 0 & 0 & 1 & 0 & 0 & 1 & 0 & 0 & 0 \\
(21) & 0 & 0 & 0 & 0 & 0 & 0 & 0 & 0 & 0 & 0 & 0 & 1 & 0 & 1 & 1 & 1 & 0 \\
(3) & 0 & 0 & 0 & 0 & 0 & 0 & 0 & 0 & 0 & 0 & 0 & 0 & 1 & 0 & 1 & 1 & 1 \\
(1^2) & 0 & 0 & 0 & 0 & 0 & 0 & 0 & 0 & 0 & 0 & 0 & 0 & 0 & 1 & 0 & 1 & 0 \\
(2) & 0 & 0 & 0 & 0 & 0 & 0 & 0 & 0 & 0 & 0 & 0 & 0 & 0 & 0 & 1 & 1 & 1 \\
(1) & 0 & 0 & 0 & 0 & 0 & 0 & 0 & 0 & 0 & 0 & 0 & 0 & 0 & 0 & 0 & 1 & 1 \\
(0) & 0 & 0 & 0 & 0 & 0 & 0 & 0 & 0 & 0 & 0 & 0 & 0 & 0 & 0 & 0 & 0 & 1
}.$$

One can also compute:

$$Y = \kbordermatrix{ r_\mu \backslash \chi_\lambda & 1^3,0 & 1^2,1 & 1,1^2 & 0,1^3 & 21,0 & 1,2 & 2,1 & 0,21 & 3,0 & 0,3 & (1^3) & (21) & (3) & (1^2) & (2) & (1) & (0) \\
1^3,\emptyset & 1 & 3 & 3 & 1 & 2 & 3 & 3 & 2 & 1 & 2 & 0 & 0 & 0 & 0 & 0 & 0 & 0 \\
1^2,1 & 1 & 1 & -1 & -1 & 2 & -1 & 1 & -2 & 1 & -1 & 0 & 0 & 0 & 0 & 0 & 0 & 0 \\
1,1^2 & 1 & -1 & -1 & 1 & 2 & -1 & -1 & 2 & 1 & 1 & 0 & 0 & 0 & 0 & 0 & 0 & 0 \\
\emptyset,1^3 & 1 & -3 & 3 & -1 & 2 & 3 & -3 & -2 & 1 & -1 & 0 & 0 & 0 & 0 & 0 & 0 & 0 \\
21,\emptyset & -1 & -1 & -1 & -1 & 0 & 1 & 1 & 0 & 1 & 1 & 0 & 0 & 0 & 0 & 0 & 0 & 0 \\
1,2 & -1 & -1 & 1 & 1 & 0 & -1 & 1 & 0 & 1 & -1 & 0 & 0 & 0 & 0 & 0 & 0 & 0 \\
2,1 & -1 & 1 & -1 & 1 & 0 & 1 & -1 & 0 & 1 & -1 & 0 & 0 & 0 & 0 & 0 & 0 & 0 \\
\emptyset,21 & -1 & 1 & 1 & -1 & 0 & -1 & -1 & 0 & 1 & 1 & 0 & 0 & 0 & 0 & 0 & 0 & 0 \\ 3,\emptyset & 1 & 0 & 0 & 1 & -1 & 0 & 0 & -1 & 1 & 1 & 0 & 0 & 0 & 0 & 0 & 0 & 0 \\
\emptyset,3 & 1 & 0 & 0 & -1 & -1 & 0 & 0 & 1 & 1 & -1 & 0 & 0 & 0 & 0 & 0 & 0 & 0 \\
(1^3) & 0 & 0 & 0 & 0 & 0 & 0 & 0 & 0 & 0 & 0 & 1 & 2 & 1 & 0 & 0 & 0 & 0 \\
(21) & 0 & 0 & 0 & 0 & 0 & 0 & 0 & 0 & 0 & 0 & -1 & 0 & 1 & 0 & 0 & 0 & 0 \\
(3) & 0 & 0 & 0 & 0 & 0 & 0 & 0 & 0 & 0 & 0 & 1 & -1 & 1 & 0 & 0 & 0 & 0 \\
(1^2) & 0 & 0 & 0 & 0 & 0 & 0 & 0 & 0 & 0 & 0 & 0 & 0 & 0 & 1 & 1 & 0 & 0 \\
(2) & 0 & 0 & 0 & 0 & 0 & 0 & 0 & 0 & 0 & 0 & 0 & 0 & 0 & -1 & 1 & 0 & 0 \\
(1) & 0 & 0 & 0 & 0 & 0 & 0 & 0 & 0 & 0 & 0 & 0 & 0 & 0 & 0 & 0 & 1 & 0 \\
(0) & 0 & 0 & 0 & 0 & 0 & 0 & 0 & 0 & 0 & 0 & 0 & 0 & 0 & 0 & 0 & 0 & 1
}.$$

Therefore, we have:

\begin{align*} &M(RSp_6) = AY = YB =\\&
\kbordermatrix{ r_\mu \backslash \chi_\lambda & 1^3,0 & 1^2,1 & 1,1^2 & 0,1^3 & 21,0 & 1,2 & 2,1 & 0,21 & 3,0 & 0,3 & (1^3) & (21) & (3) & (1^2) & (2) & (1) & (0) \\
1^3,\emptyset & 1 & 3 & 3 & 1 & 2 & 3 & 3 & 2 & 1 & 2 & 8 & 16 & 8 & 12 & 12 & 6 & 1 \\
1^2,1 & 1 & 1 & -1 & -1 & 2 & -1 & 1 & -2 & 1 & -1 & 0 & 0 & 0 & 4 & 4 & 4 & 1 \\
1,1^2 & 1 & -1 & -1 & 1 & 2 & -1 & -1 & 2 & 1 & 1 & 0 & 0 & 0 & 0 & 0 & 2 & 1 \\
\emptyset,1^3 & 1 & -3 & 3 & -1 & 2 & 3 & -3 & -2 & 1 & -1 & 0 & 0 & 0 & 0 & 0 & 0 & 1 \\
21,\emptyset & -1 & -1 & -1 & -1 & 0 & 1 & 1 & 0 & 1 & 1 & -4 & 0 & 4 & -2 & 2 & 2 & 1 \\
1,2 & -1 & -1 & 1 & 1 & 0 & -1 & 1 & 0 & 1 & -1 & 0 & 0 & 0 & 0 & 0 & 2 & 1 \\
2,1 & -1 & 1 & -1 & 1 & 0 & 1 & -1 & 0 & 1 & -1 & 0 & 0 & 0 & -2 & 2 & 0 & 1 \\
\emptyset,21 & -1 & 1 & 1 & -1 & 0 & -1 & -1 & 0 & 1 & 1 & 0 & 0 & 0 & 0 & 0 & 0 & 1 \\ 3,\emptyset & 1 & 0 & 0 & 1 & -1 & 0 & 0 & -1 & 1 & 1 & 2 & -2 & 2 & 0 & 0 & 0 & 1 \\
\emptyset,3 & 1 & 0 & 0 & -1 & -1 & 0 & 0 & 1 & 1 & -1 & 0 & 0 & 0 & 0 & 0 & 0 & 1 \\
(1^3) & 0 & 0 & 0 & 0 & 0 & 0 & 0 & 0 & 0 & 0 & 1 & 2 & 1 & 3 & 3 & 3 & 1 \\
(21) & 0 & 0 & 0 & 0 & 0 & 0 & 0 & 0 & 0 & 0 & -1 & 0 & 1 & -1 & 1 & 1 & 1 \\
(3) & 0 & 0 & 0 & 0 & 0 & 0 & 0 & 0 & 0 & 0 & 1 & -1 & 1 & 0 & 0 & 0 & 1 \\
(1^2) & 0 & 0 & 0 & 0 & 0 & 0 & 0 & 0 & 0 & 0 & 0 & 0 & 0 & 1 & 1 & 2 & 1 \\
(2) & 0 & 0 & 0 & 0 & 0 & 0 & 0 & 0 & 0 & 0 & 0 & 0 & 0 & -1 & 1 & 0 & 1 \\
(1) & 0 & 0 & 0 & 0 & 0 & 0 & 0 & 0 & 0 & 0 & 0 & 0 & 0 & 0 & 0 & 1 & 1 \\
(0) & 0 & 0 & 0 & 0 & 0 & 0 & 0 & 0 & 0 & 0 & 0 & 0 & 0 & 0 & 0 & 0 & 1
},\end{align*}

which matches the character table as calculated via the method of Li, Li, and Cao \cite{li2008representations}.

\newpage
\nocite{*}
%
\begingroup 
\let\itshape\upshape
\bibliographystyle{ieeetr-noquotes2}
\bibliography{Bibliography.bib}
\endgroup
\Addresses

\end{document}